\newtheorem{theorem}{Theorem}[section]
\newtheorem{lemma}[theorem]{Lemma}
\theoremstyle{definition}
\numberwithin{equation}{section}
\newcommand{\G}{\mathrm{G}}
\newcommand{\GL}{\mathrm{GL}}
\newcommand{\SL}{\mathrm{SL}}
\renewcommand{\L}{\mathrm{L}}
\newcommand{\Sp}{\mathrm{Sp}}
\newcommand{\SU}{\mathrm{SU}}
\newcommand{\Sz}{\mathrm{Sz}}
\renewcommand{\O}{\mathrm{O}}
\newcommand{\PSL}{\mathrm{PSL}}
\newcommand{\PSU}{\mathrm{PSU}}
\newcommand{\U}{\mathrm{U}}
\newcommand{\PSp}{\mathrm{PSp}}
\newcommand{\J}{\mathrm{J}}
\newcommand{\A}{\mathrm{A}}
\newcommand{\E}{\mathrm{E}}
\renewcommand{\S}{\mathrm{S}}
\newcommand{\Q}{\mathrm{Q}}
\newcommand{\QD}{\mathrm{QD}}
\newcommand{\D}{\mathrm{D}}
\newcommand{\nr}{\mathrm{nr}}
\newcommand{\He}{\mathrm{He}}
\newcommand{\McL}{\mathrm{McL}}
\newcommand{\Co}{\mathrm{Co}}
\newcommand{\HN}{\mathrm{HN}}
\newcommand{\ON}{\mathrm{O'N}}
\newcommand{\Suz}{\mathrm{Suz}}
\newcommand{\Fi}{\mathrm{Fi}}
\newcommand{\HS}{\mathrm{HS}}
\newcommand{\Ly}{\mathrm{Ly}}
\newcommand{\Th}{\mathrm{Th}}
\newcommand{\Ru}{\mathrm{Ru}}
\newcommand{\F}{\mathrm{F}}
\newcommand{\Z}{\mathrm{Z}}
\newcommand{\Aut}{\mathrm{Aut}}
\newcommand{\PG}{\mathrm{PG}}
\newcommand{\B}{\mathrm{B}}
\newcommand{\M}{\mathrm{M}}
\newcommand{\Fbb}{\mathbb{F}}
\newcommand{\Zbb}{\mathbb{Z}}
\newcommand{\Dmc}{\mathcal{D}}
\newcommand{\Bmc}{\mathcal{B}}
\newcommand{\Pmc}{\mathcal{P}}
\newcommand{\Smc}{\mathcal{S}}
\newcommand{\red}{\textcolor{red}}
\newcommand{\blue}{\textcolor{blue}}
\newcommand{\cyan}{\textcolor{cyan}}
\newcommand{\magenta}{\textcolor{magenta}}
\newcommand{\purple}{\textcolor{purple}}
\newcommand{\nsubK}{\magenta{\textsf{nsubK}}}
\newcommand{\nsubG}{\purple{\textsf{nsubG}}}
\newcommand{\nsubN}{\blue{\textsf{nsubN}}}
\newcommand{\norb}{\cyan{\textsf{norb}}}
\newcommand{\ndes}{\red{\textsf{ndes}}}
\renewcommand{\cos}{\textsf{Cos}}
\newcommand{\e}{\epsilon}
\renewcommand{\emptyset}{\varnothing}
\renewcommand{\leq}{\leqslant}
\renewcommand{\geq}{\geqslant}
\newcommand{\imod}[1]{\allowbreak\mkern4mu({\operator@font mod}\,\,#1)}
\DeclareMathOperator{\Fix}{Fix}
\begin{document}
 \title[]{On flag-transitive  automorphism groups of $2$-designs with $\lambda$ prime}

 \author[S.H. Alavi]{Seyed Hassan Alavi}
 \address{Seyed Hassan Alavi, Department of Mathematics, Faculty of Science, Bu-Ali Sina University, Hamedan, Iran.}
 \email{alavi.s.hassan@basu.ac.ir and  alavi.s.hassan@gmail.com}
 \author[A. Daneshkhah]{Ashraf Daneshkhah}
\address{Ashraf Daneshkhah, Department of Mathematics, Faculty of Science, Bu-Ali Sina University, Hamedan, Iran.}
\thanks{Corresponding author: Ashraf Daneshkhah}
\email{adanesh@basu.ac.ir and  daneshkhah.ashraf@gmail.com}
\author[A. Montinaro]{Alessandro Montinaro}%
\address{Alessandro Montinaro, Dipartimento di Matematica e Fisica ``E. De Giorgi'', University of Salento, Lecce, Italy. }%
\email{alessandro.montinaro@unisalento.it}

 \subjclass[]{05B05, 05B25, 20B25, 20D08}%
 \keywords{$2$-design, flag-transitive, Exceptional Lie type groups, sporadic simple groups, Suzuki-Tits ovoid, line spreads, Hermitian unital}
 \date{\today}%

\begin{abstract}
In this article, we study $2$-$(v,k,\lambda)$ designs $\Dmc$ with $\lambda$ prime admitting flag-transitive and point-primitive almost simple automorphism groups $G$ with socle $T$ a finite exceptional simple group or a sporadic simple groups. If the socle of $G$ is a finite exceptional simple group, then we prove that $\Dmc$ is isomorphic to one of two infinite families of $2$-designs with point-primitive automorphism groups,  one is the Suzuki-Tits ovoid design with parameter set $(v,b,r,k,\lambda)=(q^{2}+1,q^{2}(q^{2}+1)/(q-1),q^{2},q,q-1)$ design, where $q-1$ is a Mersenne prime, and the other is newly constructed in this paper and has parameter set $(v,b,r,k,\lambda)=(q^{3}(q^{3}-1)/2,(q+1)(q^{6}-1),(q+1)(q^{3}+1),q^{3}/2,q+1)$, where $q+1$ a Fermat prime.
If $T$ is a sporadic simple group, then we show that $\Dmc$ is isomorphic to a unique design admitting a point-primitive automorphism group with parameter set $(v,b,r,k,\lambda)=(176,1100,50,2)$, $(12,22,11,6,5)$ or $(22,77,21,6,5)$.
\end{abstract}

\maketitle

\section{Introduction}\label{sec:intro}

The main aim of this paper is to study $2$-designs with flag-transitive automorphism groups. In particular, we are interested in studying flag-transitive automorphism group of $2$-design with $\lambda$ prime. The symmetric $(v,k,2)$ designs (biplanes) have been investigated by Regueiro \cite{a:Regueiro-reduction}, and she classified all flag-transitive biplanes excluding $1$-dimensional affine automorphism groups \cite{a:Regueiro-alt-spor,a:Regueiro-reduction,a:Regueiro-classical,a:Regueiro-Exp}. Dong, Fang and Zhou studied flag-transitive automorphism groups of symmetric $(v,k,3)$ (triplanes), and in conclusion, they determined all such triplanes excluding $1$-dimensional affine automorphism groups  \cite{a:Zhou-lam3-affine,a:Zhou-lam3-spor,a:Zhou-lam3-alt,a:Zhou-lam3-excep,a:Zhou-lam3-classical}. In \cite{a:Zhou-sym-lam-prime} Z. Zhang, Y. Zhang and Zhou  generalized O'Reilly Regueiro's result~\cite{a:Regueiro-reduction} to prime $\lambda$, and proved that a flag-transitive and point-primitive automorphism group of a symmetric design with $\lambda$ prime must be of almost simple or affine type. Then by analysing these two types of primitive automorphism groups, and in conclusion, a classification of flag-transitive and point-primitive automorphism group of symmetric designs with $\lambda$ prime has been presented, except for the $1$-dimensional affine automorphism groups \cite{a:ABD-PrimeLam,a:ABD-Exp,a:ADM-PrimeLam-An,a:ABDM-HA-lam-prime}. 

In general, for flag-transitive $2$-designs with $\lambda$ prime, the case where $\lambda=2$ has been investigated in a series of papers \cite{a:A-Exp-lam2,a:Zhou-lam2-nonsym,a:DLPB-2021-lam2,a:Zhou-lam2-nonsym-An,a:ABDT-AS-lam2,a:Montinaro-lam2,a:Montinaro-PSL2lam2}. The point-primitive case is reduced to almost simple or affine type automorphism groups \cite{a:Zhang-LamP}. Therefore, in order to achieve a possible classification of such designs, we need to study the almost simple type and affine type automorphism groups. In this direction, Zhang and Shen \cite{a:Zhang-LamP-Exp} studied almost simple automorphism groups with socle finite exceptional simple groups, \cite{a:ABD-Exp,a:A-Exp-CP,a:D-Ree,a:Zhou-Exp-CP}. In this paper, we first revisit their proofs, and then we improve their main result \cite[Theorem 1.1]{a:Zhang-LamP-Exp} by obtaining all such possible $2$-designs:     

\begin{theorem}\label{thm:exp}
    Let $\Dmc$ be a nontrivial $2$-$(v,k,\lambda)$ design with $\lambda$ prime admitting a flag-transitive and point-primitive automorphism group $G$ with socle $T$ a finite exceptional simple group. Then (up to isomorphism) one of the following holds
    \begin{enumerate}[\rm (a)]
        \item $T$ is $^{2}\B_{2}(q)$ with $q^{2a+1}\geq 8$, and $\Dmc$ is the Suzuki-Tits ovoid design with parameter set $(v,b,r,k,\lambda)=(q^{2}+1,q^{2}(q^{2}+1)/(q-1),q^{2},q,q-1)$ design, where $q-1$ is a Mersenne prime;
        \item $T$ is $\G_{2}(q)$ with $q\geq 4$ even, and $\Dmc$ is the $2$-design with parameter set $(v,b,r,k,\lambda)=(q^{3}(q^{3}-1)/2
        ,(q+1)(q^{6}-1),(q+1)(q^{3}+1),q^{3}/2,q+1)$, where $q+1$ a Fermat prime and it is identified with the coset geometry $\cos(T,H,K)$, where $H=\SU_{3}(q){:}\Zbb_{2}$ and $K=[q^{6}]{:}\Zbb_{q-1}$.
    \end{enumerate}
\end{theorem}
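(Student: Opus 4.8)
The plan is to prove this classification theorem through a systematic case analysis over the possible socles $T$ among the finite exceptional simple groups, combined with the standard flag-transitivity arithmetic. First I would record the basic combinatorial constraints: since $G$ is flag-transitive, $r \mid \lambda(v-1)$ and $k \mid \lambda(v-1)$, and more importantly the classical bound $r^2 > \lambda v$ (equivalently $\lambda v < r^2$), which follows from Fisher-type inequalities. Writing $v = |T:T_\alpha|$ for a point stabilizer $T_\alpha$ and using $r = \lambda(v-1)/(k-1)$, the flag-transitivity forces $r \mid |G_\alpha|$, so $\lambda(v-1)/(k-1)$ divides the order of a maximal subgroup of $G$. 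Combined with $\lambda$ prime and the bound $k(k-1) > \lambda(v-1)$ rearranged as a bound on $v$ in terms of $|G_\alpha|$, this yields the key inequality $|G_\alpha|^3 > |G|$ (up to constants), which drastically restricts which maximal subgroups can arise as point stabilizers.

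\smallskip

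The main structural input is the classification of maximal subgroups of almost simple groups of exceptional Lie type, for which I would invoke the work of Liebeck--Seitz and the tables in the literature (as presumably compiled in the earlier sections of this paper). For each exceptional type $T \in \{{}^2\B_2(q), {}^2\G_2(q), {}^2\F_4(q), \G_2(q), \F_4(q), \E_6(q), {}^2\E_6(q), \E_7(q), \E_8(q), {}^3\D_4(q)\}$ and each candidate maximal subgroup $M = G_\alpha$, I would test the order inequality $|M|^3 \gtrsim |G|$ to eliminate the vast majority of subgroups (parabolic subgroups that are too small, subfield subgroups, most reductive maximal-rank subgroups, and the subgroups in the class $\mathcal{S}$ of almost simple type). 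The surviving candidates are few: these are the large subgroups, typically maximal parabolics of small index or certain subgroups like $\SU_3(q){:}\Zbb_2$ in $\G_2(q)$. For each survivor I would compute $v$, solve the divisibility conditions $\lambda(v-1) \equiv 0 \pmod{k-1}$ with $\lambda$ prime and $k \mid |M|$ to determine the admissible $(k,\lambda)$, and check whether an actual design exists by verifying that the block orbit under $G$ closes up into a $2$-design.

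\smallskip

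The two genuine families then emerge as follows. For $T = {}^2\B_2(q)$ the point stabilizer is the point stabilizer on the Suzuki-Tits ovoid of size $v = q^2+1$, and the design is the known Suzuki-Tits ovoid design; here the condition that $\lambda = q-1$ be prime forces $q-1$ to be a Mersenne prime. For $T = \G_2(q)$ with $q$ even, the relevant point stabilizer is $H = \SU_3(q){:}\Zbb_2$ giving $v = q^3(q^3-1)/2$, and one constructs $\Dmc$ explicitly as the coset geometry $\cos(T,H,K)$ with block stabilizer $K = [q^6]{:}\Zbb_{q-1}$; verifying that this coset incidence structure is indeed a flag-transitive $2$-design with the stated parameters, and that $\lambda = q+1$ prime forces $q+1$ to be a Fermat prime, is the new contribution. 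I expect the main obstacle to be twofold: first, handling the numerically delicate borderline cases where the order inequality alone does not eliminate a candidate subgroup, requiring finer arithmetic (divisibility by specific primitive prime divisors $r_d$ of $q^d-1$, and parity or congruence obstructions on $k$); and second, the explicit construction and verification in case (b), where one must confirm that the $K$-orbit of blocks satisfies the $2$-design axiom with exactly $\lambda = q+1$ blocks through each pair of points, rather than merely satisfying the necessary divisibility conditions. Careful bookkeeping, likely aided by the tables for small rank and computational checks in \GAP{} for the finitely many small $q$ left over, should close these gaps.
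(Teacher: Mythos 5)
Your overall strategy (reduce to large maximal subgroups via $|G|<|G_{\alpha}|^{3}$, then sieve candidates by flag-transitivity divisibilities) is the standard one in this literature, and you correctly identify the two surviving families; note, though, that the paper itself does not redo this sieve but simply cites Zhang--Shen for every socle except $\G_{2}(q)$. The genuine gaps in your proposal sit exactly where the paper does its new work. First, in the decisive case $T=\G_{2}(q)$ with $G_{\alpha}$ of type $\SU_{3}(q){:}\Zbb_{2}$ and $q=2^{f}$ even, the arithmetic sieve you describe does not close: the divisibility constraints only give $r=\lambda(q^{3}+1)$ with $\lambda$ an odd prime dividing $f(q^{2}-1)$, so infinitely many candidates for $\lambda$ survive, and neither ``finer arithmetic'' nor primitive prime divisors fixes this. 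The paper settles it (Lemma~\ref{lem:3.8}, correcting the published proof of Zhang--Shen, which breaks precisely here because $\GL_{2}(q)$ with $q$ even contains cyclic subgroups of order $q^{2}-1$) by a structural argument: $T_{B}$ lies in a parabolic $[q^{5}]{:}\GL_{2}(q)$ and contains a Sylow $2$-subgroup of it, and if $\lambda\neq q+1$ then $T_{B}=[q^{5}]{:}\SL_{2}(q)$, forcing $\SL_{2}(q)\leq T_{\alpha,B}$, which is impossible inside $\SU_{3}(q){:}\Zbb_{2}$ by the Bray--Holt--Roney-Dougal tables. (A side error: your claim $k\mid\lambda(v-1)$ is false in general; it already fails for the design in part (b).)

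Second, existence and uniqueness for part (b) cannot be handled as you suggest. GAP checks are useless because the family is infinite (one design for each $q$ with $q+1$ a Fermat prime), so the verification that $\cos(T,H,K)$ is a $2$-design must be uniform in $q$: the paper achieves this via the Higman--McLaughlin coset-geometry lemmas combined with the Liebeck--Praeger--Saxl suborbit lengths of $H$ on $T/H$ (namely $q/2-1$ suborbits of length $q^{2}(q^{3}+1)$ and one of length $(q^{2}-1)(q^{3}+1)$), followed by a counting argument forcing every point pair onto exactly $q+1$ blocks. Moreover, the theorem asserts the design is \emph{unique} up to isomorphism, which your plan never addresses: this requires a geometric lemma (inside $\Sp_{6}(q)$ acting on $\PG_{5}(q)$, the subgroup $[q^{6}]{:}\Zbb_{q-1}$ of $T$ containing a given flag stabiliser $T_{\alpha,B}$ is unique, proved via fixed incident point--line pairs of Sylow $2$-subgroups) so as to pin down the block stabiliser, and then a further argument that every group $G$ with $T\unlhd G\leq\Aut(T)$ acts as an automorphism group of the coset geometry. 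These are missing ideas, not bookkeeping.
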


The $2$-designs in Theorem~\ref{thm:exp}(a) appear in the study of flag-transitive $2$-designs with $\gcd(r,\lambda)=1$, see \cite{a:A-Exp-CP,a:Zhou-Exp-CP}. An explicit construction of such a $2$-design is given in \cite{a:A-Sz} using the natural action of Suzuki groups on Suzuki-Tits ovoid in $\PG_{3}(q)$. The $2$-designs in Theorem~\ref{thm:exp}(b) is new and we construct this design in Section \ref{sec:proof-exp}.  

%

In the second part of this paper, we study flag-transitive and point-primitive $2$-designs with $\lambda$ prime admitting almost simple sporadic automorphism groups: 

\begin{theorem}\label{thm:spor}
    Let $\Dmc$ be a nontrivial $2$-(v,k,$\lambda$) design with $\lambda$ prime admitting a flag-transitive and point-primitive automorphism group with socle a sporadic simple group. Then $(\Dmc,G)$ is (up to isomorphism) as one of the rows in {\rm Table~\ref{tbl:main}} and each design is constructed in the reference given in the last column of the table.
\end{theorem}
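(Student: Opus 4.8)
The plan is to prove Theorem~\ref{thm:spor} by a systematic reduction using the standard parameter constraints for flag-transitive $2$-designs combined with the known maximal subgroup structure of the sporadic simple groups. First I would record the basic numerical identities: for a $2$-$(v,k,\lambda)$ design one has $\lambda(v-1)=r(k-1)$ and $bk=vr$, and flag-transitivity forces $r \mid \lambda(v-1)$ together with the crucial divisibility $r \mid \gcd(\lambda(v-1), |G_x|)$, where $G_x$ is a point stabiliser. Since $G$ is point-primitive with socle $T$ a sporadic simple group, the point stabiliser $H=G_x$ is a maximal subgroup of $G$, and $v=|G{:}H|$. The key combinatorial inequality to exploit is Fisher's inequality together with the flag-transitive bound $r^2 > \lambda v$ (equivalently $r > \lambda(k-1)/k \cdot \text{something}$); more usefully, flag-transitivity gives $r \mid d$ for every nontrivial subdegree $d$ of $G$ acting on points, so $r$ divides $\gcd$ of all subdegrees. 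I would use the constraint $k \mid r \cdot \gcd(\text{subdegrees})$ and $k\mid |H|$ to bound the parameters severely.

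The main engine of the proof is a finite case analysis over all pairs $(T, H)$ with $T$ sporadic and $H$ maximal in $G$. For each such pair, $v=|G{:}H|$ is a fixed known integer, so I would proceed as follows: from $\lambda(v-1)=r(k-1)$ and the divisibility $r \mid |H|$ and $r \mid \lambda(v-1)$, enumerate the finitely many admissible values of $r$ as divisors of $\gcd(|H|,\lambda(v-1))$ that also satisfy $r^2 > \lambda v$ (this last bound comes from $r > k-1 \geq \lambda(v-1)/r \cdot (1/k)$ manipulations, or directly from the flag-transitive condition $\lambda v < r^2$). Because $\lambda$ is prime, for each candidate $r$ the value $k = 1 + \lambda(v-1)/r$ is determined, and I would retain only those triples $(v,k,\lambda)$ that yield integral $b=vr/k$ and satisfy $2 \leq k \leq v-1$ (nontriviality) along with $k \mid \lambda \gcd(\text{subdegrees})$. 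The surviving candidates must then be checked for actual existence of a flag-transitive design, which is where the subdegree data of the primitive action and, where needed, explicit computation comes in.

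The hard part will be twofold. First, assembling complete and correct subdegree (rank and suborbit) information for the primitive actions of the sporadic groups on cosets of their maximal subgroups: for the larger groups (Monster, Baby Monster, Fischer groups) this data is not uniformly available in closed form, so I would rely on the known character-table and maximal-subgroup libraries (for instance via \GAP{} and the online \textsc{Atlas}) and on the fact that $\lambda$ prime forces $r$ to divide $\lambda(v-1)$ with $\gcd(r,\lambda)$ constrained, which sharply limits how large $v$ can realistically be before the numerical sieve eliminates all candidates. Second, for the finitely many surviving parameter triples I must decide genuine existence and uniqueness of the flag-transitive design up to isomorphism; this requires constructing the block set as a $G$-orbit of $k$-subsets (equivalently a suitable union of suborbits under the stabiliser) and verifying the $2$-design property, which is a direct computation in each case and pins down exactly the three parameter sets $(176,1100,50,50,2)$, $(12,22,11,6,5)$, and $(22,77,21,6,5)$ listed in Table~\ref{tbl:main}.

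Concretely, I would organise the argument so that the bulk of pairs $(T,H)$ are discarded by the purely arithmetic sieve $\bigl(r \mid \gcd(|H|,\lambda(v-1)),\ r^2 > \lambda v,\ k\mid \lambda\gcd(\text{subdegrees})\bigr)$ applied with $\lambda$ ranging over the small primes permitted by the size of $v$, leaving only a short list for explicit verification. The anticipated outcome is that this sieve, together with the existence check, yields precisely the designs recorded in Table~\ref{tbl:main}, and I would reference the original construction papers (as indicated in the final column of that table) to certify existence rather than reconstructing each design from scratch.
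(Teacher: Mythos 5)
Your overall strategy---reduce to pairs $(G,H)$ with $H=G_\alpha$ maximal, run an arithmetic sieve on $(v,b,r,k,\lambda)$, impose subdegree divisibility, and finish the survivors by explicit computation---is essentially the strategy of the paper. However, two of the divisibility conditions that drive your sieve are simply not true, and this breaks the argument. First, flag-transitivity does \emph{not} give $r\mid d$ for every nontrivial subdegree $d$; the correct statement (Davies \cite{a:Davies-87}, property (vi) in the paper's proof) is $r\mid \lambda d$. The two coincide only when $\gcd(r,\lambda)=1$, which is exactly the case already settled in \cite{a:Zhan-CP-nonsym-sprodic} and therefore not the case where the real work lies. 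In the hard case $\lambda\geq 3$ and $\lambda\mid r$, the condition $r\mid d$ is strictly stronger than what flag-transitivity guarantees, so your sieve would eliminate pairs $(G,H)$ and parameter sets that are not actually excluded; a ``classification'' obtained this way could silently miss designs, i.e.\ it does not prove completeness. Second, your conditions $k\mid r\cdot\gcd(\text{subdegrees})$, $k\mid\lambda\gcd(\text{subdegrees})$ and $k\mid |H|$ are not theorems: flag-transitivity gives $k=|G_B:G_{\alpha,B}|$, so $k\mid |G_B|$, but $k$ need not divide $|G_\alpha|$ nor $\lambda$ times any subdegree. Any elimination based on these is again unsound. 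The paper's eliminations (its Table~\ref{tbl:poss-subdeg}) use precisely $r\mid\lambda e$ for the smallest nontrivial subdegree $e$; that is the condition you must use.

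There is also a practical gap for the largest groups. For $\B$, $\M$ and the Fischer groups, subdegree data is not available and brute enumeration over all maximal subgroups is not a workable plan; your proposal to ``rely on maximal-subgroup libraries'' and hope the sieve kills large $v$ is under-specified exactly there. The paper makes this step rigorous: from $r\mid |H|$ and $\lambda(v-1)=r(k-1)<r^2$ one gets $|G|<|H|^3$, so $H$ must be a \emph{large} maximal subgroup, and the classification of large subgroups of simple groups \cite{a:AB-Large-15} (together with the completed maximal subgroup list of the Monster \cite{a:DLP-Monster}) reduces these groups to the short lists in Table~\ref{tbl:large}, after which the purely arithmetic sieve already yields zero candidates---no subdegrees needed. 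Your stated constraints do imply $|G|<|H|^3$, but you never draw this conclusion nor invoke the tool that makes the big groups finitely checkable. Two further points: the paper first disposes of symmetric designs, $\lambda=2$, and $\gcd(r,\lambda)=1$ by citing existing classifications (\cite{a:AD-spor-ft}, \cite{a:Zhou-lam2-nonsym}, \cite{a:Zhan-CP-nonsym-sprodic}), so that the genuinely new computation concerns only $\lambda\geq 3$ with $\lambda\mid r$; and your final parameter set should read $(v,b,r,k,\lambda)=(176,1100,50,8,2)$, not $(176,1100,50,50,2)$.
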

\begin{table}
    \scriptsize
    \centering
    \caption{Sporadic simple groups and flag-transitive $2$-designs with $\lambda$ prime.}\label{tbl:main}
    \begin{tabular}{clllllllllll}
        \noalign{\smallskip}\hline\noalign{\smallskip}
        Line &
        $v$ &
        $b$ &
        $r$ &
        $k$ &
        $\lambda$ &
        $G$ &
        $G_{\alpha}$ &
        $G_{B}$ &
        Reference \\
        \noalign{\smallskip}\hline\noalign{\smallskip}
        $1$ & $12$ & $22$ & $11$ & $6$ & $5$ & 
        $\M_{11}$ &
        $\PSL_{2}(11)$ &
        $\A_{6}$ &
        \cite{a:Zhan-CP-nonsym-sprodic}\\
        $2$ & $22$ &  $77$ &  $21$ &  $6$ &  $5$ & 
        $\M_{22}$ & 
        $\PSU_{3}(4)$ &
        $2^{4}{{:}}\A_{6}$ &
        \cite{a:Zhan-CP-nonsym-sprodic}\\
        & $22$ &  $77$ &  $21$ &  $6$ &  $5$ & 
        $\M_{22}{{:}}2$ & 
        $\PSU_{3}(4){{:}}2$ &
        $2^{4}{{:}}\S_{6}$ &
        \cite{a:Zhan-CP-nonsym-sprodic}\\
        $3$ &
        $176$ & 
        $1100$ & 
        $50$ & 
        $8$ & 
        $2$ & 
        $\HS$ &
        $\PSU_{3}(5){{:}}2$ &
        $\S_{8}$ &
        \cite{a:Zhou-lam2-nonsym}
        \\
        \noalign{\smallskip}\hline\noalign{\smallskip}
        \multicolumn{10}{l}{Note: $G_{\alpha}$ is the  point-stabiliser of $\alpha$ and $G_{B}$ is the  block-stabiliser of $B$.}
    \end{tabular}
\end{table}

Recently, the third author \cite{a:Montinaro-PrimLam} studied flag-transitive and point-imprimitive automorphism groups of $2$-designs with $\lambda$ prime. Indeed, by Theorem 1.1 and more precisely Theorem 9.2 in \cite{a:Montinaro-PrimLam}, we can easily show that there exists no flag-transitive $2$-design with $\lambda$ prime admitting a point-imprimitive almost simple automorphism group with socle an exceptional or a sporadic simple group. 

\subsection{Definitions and notation}

All groups and incidence structures in this paper are finite.  We denote by $\Fbb_{q}$ the Galois field of size $q$. Symmetric and alternating groups on $n$ letters are denoted by $\S_{n}$ and $\A_{n}$, respectively. We write ``$\Zbb_n$'' for the cyclic group of order $n$. We use ``$[n]$'' or ``$n$'' to denote a group of order $n$. A finite simple group is (isomorphic to) a cyclic group of prime order, an alternating group $\A_{n}$ for $n\geq 5$, a simple group of Lie type or a sporadic simple group, see \cite{b:Atlas} or \cite[Tables~5.1A-C]{b:KL-90}.
We use the standard notation as in \cite{b:Atlas} for finite simple groups. A $2$-design $\Dmc$ with parameters $(v,k,\lambda)$ is a pair $(\Pmc,\Bmc)$ with a set $\Pmc$ of $v$ points and a set $\Bmc$ of blocks of size $b$ such that each block is a $k$-subset of $\Pmc$ and each pair of two distinct points is contained in $\lambda$ blocks. Each point of $\Dmc$ is contained in exactly $r=bk/v$ blocks which is called the \emph{replication number} of $\Dmc$. 
In this paper, we refer to $(v,b,r,k,\lambda)$ as the parameter set of $\Dmc$. The design $\Dmc$ is symmetric if $v=b$ (or $r=k$), and it is nontrivial when $2 < k < v-1$. 
An \emph{automorphism} of a $2$-design $\Dmc$ is a permutation on the points mapping blocks to blocks. The full automorphism group $\Aut(\Dmc)$ of $\Dmc$ is the group consisting of all automorphisms of $\Dmc$. A \emph{flag} of $\Dmc$ is a point-block pair $(\alpha, B)$ such that $\alpha \in B$. For $G\leq \Aut(\Dmc)$, $G$ is called \emph{flag-transitive} if $G$ acts transitively on the set of flags. The group $G$ is said to be \emph{point-primitive} if $G$ acts primitively on $\Pmc$. Further notation and definitions in both design theory and group theory are standard and can be found, for example, in \cite{b:Atlas,b:Beth-I,b:KL-90}. We use GAP \cite{GAP4} for computations. 

\section{Proof of Theorem \ref{thm:exp}}\label{sec:proof-exp}

In this section, we prove Theorem \ref{thm:exp}. If $T\neq \G_{2}(q)$, then by the arguments given in \cite{a:Zhang-LamP-Exp}, we obtain part (a) of Theorem \ref{thm:exp}. However, when $T=\G_{2}(q)$, we need to revisit the proof of Lemma~3.8 in \cite{a:Zhang-LamP-Exp}. The arguments in the proof for the case where $t=1$ and $\e=-$ when $q$ is even needs further consideration as $\GL_2(q)$ with $q$ even contains cyclic groups of order $q^{2}-1$. We then improve the result by completely determining the designs arose in this case. 

\begin{lemma}\label{lem:3.8}
    If $T = G_2(q)$ and the type of $G_\alpha$ is $\SL_{3}^{\e}(q):2$ with $\e=\pm$, then $q$ is even and $\e=-$, $T$ is a flag-transitive automorphism group on $\Dmc$, and the parameter set of $\Dmc$ is $(v,b,r,k,\lambda)=(q^{3}(q^{3}-1),(q+1)(q^{6}-1),(q+1)(q^{3} +1),q^{3},q+1)$, where $\lambda=q+1$ is a Fermat prime.
\end{lemma}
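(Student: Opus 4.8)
The plan is to run the standard flag-transitive counting against the known subgroup structure of $G_\alpha$, and then to realise the one surviving numerical case by an explicit coset geometry. First I would record the two basic identities $bk=vr$ and $r(k-1)=\lambda(v-1)$, together with the flag-transitive constraints $r\mid\lambda(v-1)$, $r\mid|G_\alpha|$, the divisibility $r\mid\lambda d$ for every nontrivial subdegree $d$ of $G$ on points, and the non-symmetric bound $\lambda v<r^{2}$. With $G_\alpha$ of type $\SL_3^\e(q){:}2$ one computes $v=|G:G_\alpha|$ from $|\G_2(q)|=q^{6}(q^{6}-1)(q^{2}-1)$ and $|\SL_3^\e(q)|=q^{3}(q^{2}-1)(q^{3}-1)$ or $q^{3}(q^{2}-1)(q^{3}+1)$ according as $\e=+$ or $\e=-$, keeping track of whether the outer ``${:}2$'' lies inside $T$ or is induced by $G/T$. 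In the relevant subcase the stabiliser in $T$ is $H=\SU_3(q)$, so that $v=|T:H|=q^{3}(q^{3}-1)$.

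Next I would exploit that $\lambda$ is prime. From $r(k-1)=\lambda(v-1)$, with $r$ forced to be a large divisor of $|G_\alpha|$ by $\lambda v<r^{2}$, the primality of $\lambda$ leaves only finitely many arithmetic possibilities for $(r,k)$ once $v$ is fixed, and I expect these to pin down $\lambda=q+1$. The odd-$q$ and $\e=+$ subcases should be eliminated here: once $\lambda=q+1$ is forced, for $q$ odd the value $q+1$ is even and exceeds $2$, hence not prime, a contradiction; and for $\e=+$ no divisor $r$ of $|\SL_3(q){:}2|$ simultaneously satisfies $r\mid\lambda(v-1)$, $\lambda v<r^{2}$ and yields an integral $k$ with $\lambda$ prime. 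This isolates $\e=-$, $q$ even, and $\lambda=q+1$, which is a Fermat prime because $q=2^{a}$.

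With the numerics fixed, the remaining task is existence. I would construct $\Dmc$ as the coset geometry $\cos(T,H,K)$ with $H=\SU_3(q)$ the point stabiliser and $K=[q^{6}]{:}\Zbb_{q-1}$ a subgroup of a maximal parabolic of $\G_2(q)$, taking points and blocks to be the cosets of $H$ and $K$ and incidence to be nonempty intersection. A direct index computation gives $|K|=q^{6}(q-1)$, hence $b=|T:K|=(q+1)(q^{6}-1)$ and $|H\cap K|=q^{3}(q-1)$, whence $r=|H:H\cap K|=(q+1)(q^{3}+1)$ and $k=|K:H\cap K|=q^{3}$; flag-transitivity of $T$ (and thus of $G$) follows because $T$ is transitive on the cosets of $H\cap K$. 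The balance parameter is then read off as the common value $r(k-1)/(v-1)$, and I would verify that it is the prime $q+1$ and, crucially, that every pair of distinct points lies in exactly that many blocks, i.e.\ that the $2$-design axiom genuinely holds; this reduces to analysing the $H$-orbits on points and the structure of $H\cap K$ inside $\SU_3(q)$.

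The main obstacle is precisely the even-characteristic phenomenon flagged before the lemma: when $q$ is even, $\GL_2(q)\leq\SU_3(q)$ contains cyclic (Singer-type) tori of order $q^{2}-1$ that do not occur for $q$ odd, so additional candidate block stabilisers $K$ and additional $H$-orbit configurations must be accounted for. I would therefore carry out a careful analysis of the conjugacy classes of subgroups of the relevant order in $\SU_3(q)$ and in the parabolic, together with the associated double cosets $H\backslash T/K$, in order to single out $K=[q^{6}]{:}\Zbb_{q-1}$ as the one producing a genuine $2$-design and to discard the spurious possibilities. This is exactly the point that the treatment of the case $t=1$, $\e=-$, $q$ even in \cite{a:Zhang-LamP-Exp} did not fully resolve, and completing it is what yields the new design.
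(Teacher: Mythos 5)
There are genuine gaps, and the most serious one is arithmetic rather than conceptual. In $T=\G_2(q)$ the maximal subgroup of type $\SL_3^{-}(q){:}2$ is $\SU_3(q){:}\Zbb_2$ with the ``${:}2$'' lying \emph{inside} $T$ (it is not induced by $G/T$), so $T_\alpha=\SU_3(q){:}\Zbb_2$ and hence $v=|T:T_\alpha|=q^3(q^3-1)/2$ and $k=q^3/2$; these halved values are what the paper's proof derives and what appear in Theorem~\ref{thm:exp}(b). Your choice $H=\SU_3(q)$, giving $v=q^3(q^3-1)$ and $k=q^3$, is fatal for your existence argument: with $r=(q+1)(q^3+1)$ and $\lambda=q+1$ one has $r(k-1)=(q+1)(q^6-1)$ but $\lambda(v-1)=(q+1)(q^6-q^3-1)$, so the basic identity $r(k-1)=\lambda(v-1)$ fails and the coset geometry $\cos(T,\SU_3(q),[q^6]{:}\Zbb_{q-1})$ is a tactical configuration ($vr=bk$) that can never be a $2$-design --- its putative balance $r(k-1)/(v-1)$ is not even an integer, so your plan to ``read off'' $\lambda=q+1$ cannot succeed. (To be fair, the parameter set printed in the lemma statement contains the same factor-of-two slip, but it is internally inconsistent for exactly this reason; a careful check of $r(k-1)=\lambda(v-1)$ would have revealed it. With the correct $H=\SU_3(q){:}\Zbb_2$ one gets $Q=H\cap K$ of order $2q^3(q-1)$, $k=q^3/2$, $v=q^3(q^3-1)/2$, and the arithmetic closes with $\lambda=q+1$.)

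The second gap is that the heart of the lemma --- proving $\lambda=q+1$ and not some other prime --- is only expressed as a hope (``I expect these to pin down $\lambda=q+1$''). Pure counting does not do this: from $|T_B|=f_1q^6(q^2-1)/\lambda$ one only gets $\lambda\mid f_1(q^2-1)$, which leaves many primes, and this is precisely where the proof in \cite{a:Zhang-LamP-Exp} broke down for $q$ even. The paper's argument is structural: $T_B$ lies in a parabolic $M=[q^5]{:}\GL_2(q)$ and contains a Sylow $2$-subgroup of $M$, so $T_B/[q^5]$ is a subgroup of $\GL_2(q)$ of order $f_1q(q^2-1)/\lambda$; if $\lambda\neq q+1$ this forces $\SL_2(q)\leq T_B/[q^5]$, hence $T_B=[q^5]{:}\SL_2(q)$, and then index considerations place a copy of $\SL_2(q)$ inside $T_{\alpha,B}\leq\SU_3(q){:}2$, which is impossible by the known subgroup structure of $\SU_3(q){:}2$. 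Your proposal contains no argument of this kind. Two further points are missing: the lemma asserts that $T$ itself is flag-transitive on the \emph{given} design $\Dmc$, which is not the same as the automatic flag-transitivity of $T$ on a coset geometry you construct; the paper proves it by observing that $r=(q+1)(q^3+1)$ is odd, that consequently $q^3+1$ divides $|T_\alpha:T_{\alpha,B}|$, and that $\lambda=q+1$ cannot divide $|\mathrm{Out}(T)|$, forcing $|T_\alpha:T_{\alpha,B}|=r$. And even with the correct stabiliser, verifying that the pair-balance of the coset geometry is constant (done in the paper by computing the $H$-orbit decomposition of the point set via \cite{a:LPS2} and counting blocks through pairs orbit by orbit) is an essential step that you defer but do not supply.
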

\begin{proof}
    Recall from the proof of \cite[Lemma 3.8]{a:Zhang-LamP-Exp} that $t$ is a divisor of $q^3-\e1$, and 
    \[
    v=\frac{q^3(q^3)+\e1}{2}, 
    b=\frac{\lambda q^3 (q^6 - 1)}{2kt},
    r = \frac{\lambda(q^3 - \epsilon_1)}{2t}
    \text{ and } k = \frac{t(q^3 + \epsilon)}{2} + 1.
    \]
    As noted above, we only need to consider the case where $t=1$ and $\e=-$ when $q=2^f$ is even. Moreover, the point-stabiliser $T_{\alpha}$ is $\SU_{3}(q){:}\Zbb_{2}$, and 
    \begin{align}\label{eq:exp}
        | T_{B}| =\frac{f_{1}q^{6}(q^{2}-1)}{\lambda },
    \end{align}
    where $B$ is a block containing $\alpha$ and $f_1|T:T_B|=b$ for some divisor $f_1$ of $f$. In addition, 
    \[
    v=\frac{q^3(q^3-1)}{2}, \ 
    b=\lambda(q^{6}-1), \ 
    k = \frac{q^3}{2} 
    \text{ and }
    r = \lambda(q^{3}+1).
    \]
    In this case, $T_{B}$ is contained in a parabolic subgroup $M=R:\GL_{2}(q)$, where $R=[q^{5}]$. 
    We already know by \cite{a:A-Exp-lam2} that $\lambda$ is an odd prime. 
    
    It follows from \eqref{eq:exp} that $\lambda \mid f_{1}(q^{2}-1)$ since $q$ is even.
    As $T_{B}$ is contained in $M=R:\GL_{2}(q)$ of $\G_2(q)$, we conclude that $T_{B}$ contains a
    Sylow $2$-subgroup of $M$. Then $T_{B}/R$ may be viewed as a subgroup of $\GL_{2}(q)$ of order $f_{1}q(q^{2}-1)/\lambda $. If $\lambda \neq q+1$,  then $\SL_{2}(q)\leq
    T_{B}/R$, and hence $T_{B}=R:\SL_{2}(q)$ and $\lambda =f_{1}$. Thus $2q^{2}$ divides $|T_{\alpha ,B}\cap R|$ and $\SL_{2}(q)\leq
    T_{\alpha ,B}$ since $| T_{\alpha }:T_{\alpha ,B}|$ is a divisor of $ k=q^{3}/2$. However, $T_{\alpha}=\SU_{3}(q){:}\Zbb_{2}$ does not
    contain such a subgroup by \cite[Table 8.5]{b:BHR-Max-Low}. Therefore, $\lambda =q+1$ is a Fermat prime, and
    hence we obtain the parameter set of $\Dmc$ as in the statement of lemma. 
    Furthermore, we have $$(q+1)(q^{3}+1)=r=\left\vert G_{\alpha}:G_{\alpha,B}\right\vert=\left\vert G_{\alpha}/T_{\alpha}:T_{\alpha}G_{\alpha,B}/T_{\alpha}\right\vert \cdot \left\vert T_{\alpha}:T_{\alpha,B}\right\vert$$ since $G$ is flag-transitive on $\mathcal{D}$ and $T_{\alpha}$ is a normal subgroup of $G_{\alpha}$. Then $T_{\alpha}$ contains a Sylow $2$-subgroup of $T_{\alpha}$ since $r$ is odd, and so $q^{3}+1$ divides $\left\vert T_{\alpha}:T_{\alpha, B}\right \vert$ by \cite[Table 8.5]{b:BHR-Max-Low}. Note that $\lambda=q+1$ does not divide $\log_{2}q$, and hence  $\lambda$ does not divide the order of $\rm{Out}(T)$. Thus $q+1$ does not divide the order of $G_{\alpha}/T_{\alpha}$ since $G_{\alpha}/T_{\alpha}$ is isomorphic to a subgroup of $\rm{Out}(T)$. Therefore $\left\vert G_{\alpha}/T_{\alpha}:T_{\alpha}G_{\alpha,B}/T_{\alpha}\right\vert=1$ and $\left\vert T_{\alpha}:T_{\alpha,B}\right\vert=r=(q+1)(q^{3}+1)$ since $q+1$ is a prime and $q^{3}+1$ divides $\left\vert T_{\alpha}:T_{\alpha, B}\right \vert$, and hence $T$ acts flag-transitively on $\Dmc$.
\end{proof}

We now describe the $2$-design with this parameter set. To this end, we consider the group $\Sp_{6}(q)$ with $q$ even in its natural representation on $V_{6}(q)$. Let $\mathbf{f}$ be its invariant symplectic form. Recall that 
$\Sp_{6}(q)$ with $q$ even has a unique conjugacy class of subgroups isomorphic
to $\G_{2}(q)$ by \cite[Table 8.29]{b:BHR-Max-Low}. Let $T$ be any representative of
such a class of subgroups. The group $T$ has a unique conjugacy class of subgroups isomorphic
to $\SU_{3}(q){:}\Zbb_{2}$ by \cite[Table 8.30]{b:BHR-Max-Low}, and let $H$ be any
representative of such a class of subgroups. Then $H$ preserves an extension
field structure of $V_{6}(q)$ by \cite[Tables 8.28 and 8.30]{b:BHR-Max-Low}, that is to say, 
$H$ preserves $V_{3}(q^{2})$. Let $\mathbf{h}$ be the $H^{\prime }$-invariant
hermitian form. Since the non-degenerate symplectic form is unique up to
isometry and the non-degenerate Hermitian form of an odd dimensional
is invariant up to isometry, we may assume $\Sp_{6}(q)$, and  $H$ and  $T$ are taken in such a way that  $\mathbf{f}=\mathbf{h+h}^{q}$.

Let us focus on the action of $\PSp_{6}(q)$ on $\PG_{5}(q)$. The set $\Smc$ of all $1$-subspaces of $V_{3}(q^{2})$ regarded over $\Fbb_{q}$ is an $H$-invariant Desarguesian line spread of $\PG_{5}(q)$.
Moreover, the actions of $H$ on $\Smc$ and on the points of $\PG_{2}(q^{2})$ are equivalent, in particular, $H$ induces $\PSU_{3}(q):\Zbb_{2}$ on $\Smc$, and $\Smc$ consists of two $H$-orbits of length $%
q^{3}+1$ and $q^{2}(q^{2}-q+1)$ which correspond to a Hermitian unital of
order $q$ of $\PG_{2}(q^{2})$ and its complementary set. Denote by $\mathcal{H}$ the $\bar{H}$-orbit on $\Smc$ of length $q^{3}+1$, and let $Q=(P:C):\Zbb_{2}$ as a subgroup of   $\bar{H}$, where $P$ is a Sylow $2$-subgroup of $\bar{H}^{\prime }$ and $C=\Zbb_{q-1}$. We now consider the $\bar{H}$-action on $\PG_{2}(q^{2})$ (for instance, see \cite[Satz II.10.12]{b:Hupp-I}), we deduce that the following facts about $Q$ and some of its subgroups:

\begin{enumerate}
    \item $P:C$ fixes a unique line $\ell$ of $\Smc$ pointwise,
    and $\ell$ lies in $\mathcal{H}$. Further, $\ell$ is the unique $P$-invariant
    line of $\Smc$, and the $\Fix(P)=\ell $, where $\Fix(P)$ is the set of
    points of $PG_{5}(q)$ fixed by all elements of $P$. Moreover, $Q$ preserves $\ell$ and fixes a unique point on $\ell$, say $\alpha$.
    
    \item $(\alpha,\ell )$ is the unique $Q$-invariant incident point-line pair of $\PG_{5}(q)$. Indeed, let $\ell^{\prime}$ be any line of $\PG_{5}(q)$ preserved by $Q$. If $\ell^{\prime} \notin \mathcal{S}$, then $Q$ preserves the subset of $\mathcal{S}$ consisting of the $q+1$ lines intersecting $\ell^{\prime}$ in a point since $\mathcal{S}$ is $Q$-invariant. However, this is impossible since the $P$-orbits on $\mathcal{S}$ are one of length $1$, one of length $q^{2}$ and $q$ ones each of length $q^{3}$. Thus $\ell^{\prime} \in \mathcal{S}$, and hence $\ell^{\prime}=\ell$ by (1), and the assertion follows.    
\end{enumerate}

\begin{lemma}\label{Unique}
    There is a unique subgroup $K$ of $T$ of order $q^{6}(q-1)$ containing $Q$. Further, $K\cap H=Q$.
\end{lemma}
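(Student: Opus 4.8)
The plan is to treat existence, the intersection identity $K\cap H=Q$, and the global uniqueness separately, using the point $\alpha$ as the organising object. Since $Q$ fixes $\alpha$ and $T=\G_{2}(q)$ is transitive on the $(q^{6}-1)/(q-1)$ points of $\PG_{5}(q)$ with point stabiliser a maximal parabolic $M=R{:}\GL_{2}(q)$, $R=[q^{5}]$, we have $Q\leq T_{\alpha}=M$, so it is natural to build $K$ inside $M$. For existence I would project $M$ onto its Levi quotient $M/R\cong\GL_{2}(q)$. The torus $C\cong\Zbb_{q-1}$ has odd order, hence $C\cap R=1$ and $C$ maps isomorphically onto a torus $\bar{C}$ of order $q-1$; the $2$-group $P$ and the outer involution map into the Sylow $2$-subgroup of $\GL_{2}(q)$, that is, into a root subgroup $[q]$. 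Because $\ell$ is the unique $Q$-invariant line through $\alpha$, the image $\bar{Q}$ fixes the corresponding point of the residual $\PG_{1}(q)$ and so lies in the Borel subgroup of $\GL_{2}(q)$, in fact in its subgroup $[q]{:}\bar{C}$ of order $q(q-1)$. Pulling this back to $M$ yields $K=[q^{6}]{:}\Zbb_{q-1}$ of order $q^{6}(q-1)$ with $Q\leq K\leq M=T_{\alpha}$, where $[q^{6}]=R{:}[q]$ is a Sylow $2$-subgroup of $T$.

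The identity $K\cap H=Q$ then follows with little effort. As $K\leq T_{\alpha}$, every element of $K\cap H$ fixes $\alpha$; since $\Smc$ is a line spread of $\PG_{5}(q)$, the point $\alpha$ lies on a unique member of $\Smc$, namely $\ell$, and elements of $H$ permute $\Smc$, so $K\cap H$ stabilises $\ell$ as well. Hence $K\cap H\leq H_{\alpha}\cap H_{\ell}$. A direct count using the action of $H$ on the Hermitian unital $\Hmc$ gives $|H_{\ell}|=2q^{3}(q^{2}-1)$, while the Levi torus $\Zbb_{q^{2}-1}$ of $H_{\ell}$ induces $\Zbb_{q+1}$ on the $q+1$ points of $\ell$, so the stabiliser of $\alpha$ in $H_{\ell}$ has order $2q^{3}(q-1)=|Q|$. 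As $Q$ fixes $(\alpha,\ell)$ and lies in $H$, we obtain $H_{\alpha}\cap H_{\ell}=Q$, and therefore $K\cap H=Q$.

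For uniqueness, let $K'\leq T$ have order $q^{6}(q-1)$ and contain $Q$. Since $q-1$ is odd, $K'$ contains a full Sylow $2$-subgroup $U'$ of $T$, of order $q^{6}=|T|_{2}$. By the classification of the maximal subgroups of $\G_{2}(q)$ (see \cite{b:BHR-Max-Low}), the only maximal subgroups whose order is divisible by $q^{6}$ are the two maximal parabolics, so $K'$ lies in one of them, say $M_{0}=[q^{5}]{:}\GL_{2}(q)$. Then $R_{0}=O_{2}(M_{0})\leq U'\leq K'$, and $\bar{K'}=K'/R_{0}\leq\GL_{2}(q)$ has order $q(q-1)$; such a subgroup is too small to act irreducibly (an irreducible subgroup of $\GL_{2}(q)$ has order divisible by $q+1$), so $\bar{K'}$ lies in a Borel subgroup of $\GL_{2}(q)$ and its Sylow $2$-subgroup is normal. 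Combined with $R_{0}\trianglelefteq K'$ this gives $U'\trianglelefteq K'$, whence $K'\leq N_{T}(U')=B_{0}$, a Borel subgroup $B_{0}=U'{:}T_{0}$ with $T_{0}\cong\Zbb_{q-1}\times\Zbb_{q-1}$. Now $B_{0}$ fixes a unique incident point-line pair of the split Cayley hexagon, and since $Q\leq K'\leq B_{0}$ this pair is $Q$-invariant, so by property (2) it equals $(\alpha,\ell)$; thus $U'=[q^{6}]$ is the specific Sylow $2$-subgroup fixing $(\alpha,\ell)$. Finally $C$ and $K'\cap T_{0}$ are both Hall $2'$-subgroups of $K'$, hence conjugate, so $K'=U'{:}C=[q^{6}]{:}C=K$.

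The main obstacle I anticipate is the uniqueness step, and within it the reduction that forces $K'$ into a single parabolic and then a single Borel: the crucial leverage is that $\gcd(|K'|,q+1)=1$ for $q$ even (which is what rules out any irreducible behaviour in the Levi $\GL_{2}(q)$ and, via the maximal-subgroup list, confines $K'$ to a parabolic), after which property (2) does the delicate work of identifying the Borel. The existence and the identity $K\cap H=Q$ should be routine once the Levi projection and the spread-line argument are set up.
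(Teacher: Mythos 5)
Your overall architecture is sound and runs parallel to the paper's: existence of $K$ inside a parabolic, the identity $K\cap H=Q$ via $H_{\alpha,\ell}=Q$, and uniqueness by pinning down a flag fixed by the candidate group and invoking fact (2). (The paper instead builds $K=S{:}C$ directly inside $T_{\alpha,\ell}$ using Cooperstein's description of the parabolics, asserts $K\cap H=Q$ without computation, and for uniqueness quotes that any candidate has the form $W{:}C$ with $W$ a Sylow $2$-subgroup of $T$ fixing a unique incident point--line pair.) However, the step you yourself single out as the crucial leverage of the uniqueness argument rests on a false statement: it is \emph{not} true that an irreducible subgroup of $\GL_{2}(q)$ has order divisible by $q+1$. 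For $q$ even the group generated by $\mathrm{diag}(\lambda,\lambda^{-1})$, with $\lambda$ of order $q-1$, and the coordinate swap is irreducible (imprimitive) of order $2(q-1)$, which is coprime to $q+1$; likewise $\SL_{2}(q_{0})<\GL_{2}(q_{0}^{2})$ is irreducible of order $q_{0}(q_{0}^{2}-1)$. So ``too small to act irreducibly'' is not a valid principle, and as written the step forcing $\overline{K'}$ into a Borel subgroup fails.

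The step is repairable, because what you actually need is not smallness of $|\overline{K'}|$ but the fact that its $2$-part $q$ is the \emph{full} $2$-part of $|\GL_{2}(q)|$. For $q$ even, $\GL_{2}(q)\cong\SL_{2}(q)\times\Zbb_{q-1}$ with $\SL_{2}(q)=\PSL_{2}(q)$, and the maximal subgroups of $\PSL_{2}(q)$, $q=2^{f}\geq 4$, are $[q]{:}\Zbb_{q-1}$, $\D_{2(q-1)}$, $\D_{2(q+1)}$ and $\PSL_{2}(q_{0})$ with $q=q_{0}^{r}$, $r$ prime; of these only the Borel has order divisible by $q$. Hence the image of $\overline{K'}$ in $\PSL_{2}(q)$ (which still has order divisible by $q$, the kernel $\Zbb_{q-1}$ being of odd order, and is proper since $q(q-1)<q(q^{2}-1)$) lies in a Borel, so $\overline{K'}$ lies in a Borel of $\GL_{2}(q)$; since the unipotent radical $[q]$ of that Borel contains every $2$-element of it, the Sylow $2$-subgroup of $\overline{K'}$ is normal, which is what you need. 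With this patch the rest of your argument goes through, and the final step does not even require Hall conjugacy: $U'\leq K'$, $C\leq K'$ and $|U'C|=q^{6}(q-1)=|K'|$ already give $K'=U'C=K$. Once repaired, your proof is a genuine and in places more complete alternative: your counting argument for $K\cap H=Q$ and your derivation of the normality of the Sylow $2$-subgroup supply justifications for statements the paper asserts with only a citation of \cite[Table 8.30]{b:BHR-Max-Low} and \cite{a:Cooperstein-G2-even}.
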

\begin{proof}
    Let $(\alpha,\ell )$ be the incident point-line pair of $\PG_{5}(q)$ preserved by $Q$ by (2). Hence, $Q\leq T_{\alpha,\ell}$. By \cite[Lemmas 5.1 and 5.2]{a:Cooperstein-G2-even}, $T_{\alpha}=R:L$, where $R$ is a $2$-group of order $q^{5}$ and $L\cong \GL_{2}(q)$, and $T_{\alpha,\ell}=R:(\Z(L)\times
    \F_{q(q-1)})$, where $\F_{q(q-1)}$ is a Frobenius group of order $q(q-1)$. Let $S$ be the (unique) Sylow $2$%
    -subgroup of $T_{\alpha, \ell}$, then $K:=S:C$ is a subgroup of $T_{\alpha, \ell}$ containing $Q$. Further, $K$ has order $q^{6}(q-1)$ and $K\cap H=Q$.
    
    Let $M$ be any further subgroup of $G$ of order $q^{6}(q-1)$ containing $Q$.
    Then $M=W:C$ with $W$ a Sylow $2$-subgroup of $G$ by \cite[Table 8.30]{b:BHR-Max-Low}. Moreover, $W$ fixes a unique incident point-line pair of $\PG_{5}(q)$ by \cite[Lemmas 5.1 and 5.2]{a:Cooperstein-G2-even}, say $(\alpha^{\prime},\ell^{\prime})$. Thus $C$ fixes $(\alpha^{\prime},\ell^\prime)$ since $C$ normalizes $W$, and hence $M$ fixes $(\alpha^{\prime},\ell^\prime)$ since $M=W:C$. In particular, $Q$ fixes $(\alpha^{\prime},\ell^\prime)$ since $Q<M$, and therefore $(\alpha^{\prime},\ell^\prime)=(\alpha,\ell)$ by (2). Thus $M \leq T_{\alpha,\ell}$. Since $T_{\alpha,\ell}$ has a unique Sylow $2$-subgroup as well as $Q \leq K \cap M$ and $\left\vert M\right\vert=\left\vert K\right\vert=q^{6}(q-1)$, we conclude that $M=K$.  
    
    
    
\end{proof}

For the group $T$ and its subgroups $H$ and $K$, we now consider the coset geometry $\Dmc_{0}:=\cos(T,H,K)$ which is the incidence structure whose point set and line (block) set are $\Pmc_{0}=\{ Hx :x \in T\}$ and $\Bmc_{0}=\{ Ky :y \in T\}$, respectively, and the incidence is given by nontrivial intersection, that is to say, $Hx$ is incident with $Ky$ if and only if $Hx\cap Ky\neq \emptyset$.

\begin{lemma}\label{lem:exp1}
    If $q\geq 4$ is even, then $\Dmc_{0}$ is a $2$-design with parameter set 
    $(v_{0},b_{0},r_{0},k_{0},\lambda_{0})= 
    (q^{3}(q^{3}-1)/2,
    (q+1)(q^{6}-1),
    (q+1)(q^{3}+1),
    q^{3}/2,
    q+1)$ design admitting $T$
    as a flag-transitive automorphism group.
\end{lemma}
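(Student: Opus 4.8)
The plan is to treat $\Dmc_0=\cos(T,H,K)$ as a coset geometry and exploit the general features of such structures, reserving the real work for the $2$-balance. First I would record that $T$ acts by right multiplication transitively on the points $\Pmc_0=\{Hx\}$ and on the blocks $\Bmc_0=\{Ky\}$, and that this action is automatically flag-transitive: the stabiliser of the base point $H$ is $H$ itself, the blocks incident with $H$ are exactly $\{Kh:h\in H\}$ (those $Ky$ with $Ky\cap H\neq\emptyset$), and this set is precisely the $H$-orbit of the base block $K$, so $H$ is transitive on the blocks through its fixed point. Hence $T$ is flag-transitive, as claimed. The four numerical parameters then come from index and orbit-length computations: $v_0=|T:H|$ and $b_0=|T:K|$, while the points incident with the base block $K$ form the $K$-orbit $\{Hk:k\in K\}$ of size $k_0=|K:H\cap K|$ and the blocks through the base point $H$ form the $H$-orbit $\{Kh:h\in H\}$ of size $r_0=|H:H\cap K|$. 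By Lemma~\ref{Unique} we have $H\cap K=Q$ with $|Q|=2q^{3}(q-1)$ (as $Q=(P{:}C){:}\Zbb_2$, $|P|=q^{3}$, $|C|=q-1$), and substituting $|T|=q^{6}(q^{6}-1)(q^{2}-1)$, $|H|=2q^{3}(q^{3}+1)(q^{2}-1)$ and $|K|=q^{6}(q-1)$ yields exactly $(v_0,b_0,r_0,k_0)=(q^{3}(q^{3}-1)/2,\,(q+1)(q^{6}-1),\,(q+1)(q^{3}+1),\,q^{3}/2)$.

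It remains to prove that $\Dmc_0$ is a genuine $2$-design with $\lambda_0=q+1$, i.e. that the number $\lambda(P,P')$ of blocks through an arbitrary pair of distinct points is the constant $q+1$. I would first note that the average value is already forced: one always has $\sum_{P'\neq P}\lambda(P,P')=r_0(k_0-1)$, and with the parameters above the identity $r_0(k_0-1)=(q+1)(v_0-1)$ holds (using $q^{6}-q^{3}-2=(q^{3}-2)(q^{3}+1)$), so the mean equals $q+1$. The entire content is therefore the \emph{constancy} of $\lambda(P,P')$. Fixing $P=H$ by point-transitivity, $\lambda(H,P')$ depends only on the $H$-suborbit containing $P'$, so the claim is equivalent to the assertion that this common-block count is the same across all nontrivial $H$-orbits on $\Pmc_0$; concretely $\lambda(H,Hy)=|KH\cap KHy|/|K|$, and constancy of this quantity over $y\notin H$ is what must be shown, or equivalently that the $T$-invariant matrix $NN^{\top}$ (with $N$ the point-block incidence matrix) has all off-diagonal entries equal.

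To establish this constancy I would pass to the geometric model in $\PG_5(q)$ developed before the lemma. The base point $H$ carries the Desarguesian spread $\Smc$ together with its distinguished $H$-orbit $\Hmc$ of $q^{3}+1$ lines forming a Hermitian unital, while by items (1)--(2) the base block $K$ lies in $T_{\alpha,\ell}$ and $Q$ fixes the unique incident point-line pair $(\alpha,\ell)$ with $\ell\in\Hmc$. The strategy is to translate the intersection-incidence $Hx\cap Ky\neq\emptyset$ into an incidence condition between the unital attached to a point and the flag $(\alpha,\ell)$ attached to a block, and then to count the common blocks through two points $P,P'$ using the $2$-transitivity of the induced $\PSU_{3}(q)$ on the $q^{3}+1$ points of $\Hmc$ together with the $P$-orbit lengths $1$, $q^{2}$ and $q^{3}$ (with multiplicities $1$, $1$ and $q$) on $\Smc$ recorded in item (2). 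The aim is to show that any second point $P'$ shares with $P$ exactly one ``bundle'' of $q+1$ blocks, the one associated with a single common unital incidence, independently of the suborbit of $P'$. The main obstacle is precisely this last step: proving pair-independence, i.e. ruling out that distinct $H$-suborbits contribute different numbers of common blocks. This is where the $2$-transitivity on $\Hmc$ and the precise interaction of $Q$ with the orbit structure on $\Smc$ must be combined, since the easy averaging identity only pins down the mean and not the required constancy.
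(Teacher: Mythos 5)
Your setup is sound and matches the paper's: right multiplication gives point- and block-transitivity, the blocks through the base point $H$ form the $H$-orbit $\{Kh : h\in H\}$, so $T$ is flag-transitive, and the index computations with $H\cap K=Q$ of order $2q^{3}(q-1)$ yield the four parameters $(v_0,b_0,r_0,k_0)$. You also correctly identify the crux: the averaging identity $r_0(k_0-1)=(q+1)(v_0-1)$ only fixes the \emph{mean} of $\lambda(H,P')$, and the whole content of the lemma is the \emph{constancy} of this count across the $H$-suborbits. But this is exactly where your proposal stops being a proof: the final paragraph proposes to verify constancy by translating incidence into the $\PG_5(q)$ geometry and invoking $2$-transitivity of $\PSU_3(q)$ on the Hermitian unital, and you yourself flag that ``the main obstacle is precisely this last step.'' Nothing in the sketch actually rules out different suborbits contributing different numbers of common blocks, so the $2$-design property --- the statement being proved --- remains unestablished. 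This is a genuine gap, not a presentational one.

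The paper closes this gap by an argument you never bring into play: it does not analyse incidences geometrically at all, but instead uses the known $H$-orbit decomposition of the \emph{point set} (from Liebeck--Praeger--Saxl, Proposition 1): there are $q/2-1$ nontrivial orbits $\mathcal{O}_j$ of length $q^{2}(q^{3}+1)$ and one of length $(q^{2}-1)(q^{3}+1)$. Each pair $(\mathcal{O}_j,\mathcal{B}_0(H))$ is a tactical configuration (Dembowski 1.2.6), giving $b_jk_j=v_jr_j$ with $b_j=r_0=(q+1)(q^{3}+1)$, where $r_j$ is precisely your $\lambda(H,P')$ for $P'\in\mathcal{O}_j$. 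Coprimality forces $q^{2}\mid k_j$ for $j<q/2$ and $(q-1)\mid k_{q/2}$, and the single constraint $\sum_j k_j=k_0-1=q^{3}/2-1$ then pins down $k_j=q^{2}$ for $j<q/2$ and $k_{q/2}=q^{2}-1$, whence $r_j=q+1$ for every $j$. So the constancy you could not reach follows from orbit lengths plus elementary divisibility; if you want to complete your proof along your own lines, the missing ingredient is precisely this list of $H$-orbit sizes on $\Pmc_0$ (not the $P$-orbit sizes on the spread $\Smc$, which is what you quote), after which the geometric machinery becomes unnecessary.
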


\begin{proof}
    By \cite[Lemmas 1 and 2%
    ]{a:HigMcL-61}, the incidence structure $\Dmc_{0}$ has parameter set
    $(v_{0},b_{0},r_{0},k_{0})= 
    (q^{3}(q^{3}-1)/2,
    (q+1)(q^{6}-1),
    (q+1)(q^{3}+1),
    q^{3}/2)$
    and $G$ is a flag-transitive automorphism group of $\Dmc_{0}$. 
    By \cite[Table 8.30]{b:BHR-Max-Low}, the group $G$ has a unique conjugacy class of maximal subgroups isomorphic to $H$, and so the $G$-actions
    on $\mathcal{P}_{0}$ and on $\{ H^{x} : x \in G\}$ are permutationally isomorphic. 
    By \cite[Proposition 1]{a:LPS2}, the $H$-orbits on $\mathcal{P}_{0}\setminus \left\{ H\right\} $
    are $\mathcal{O}_{j}$, $j=1,...,q/2$, with $| \mathcal{O}%
    _{j}| =q^{2}(q^{3}+1)$, for $j=1,...,q/2-1$ and $| 
    \mathcal{O}_{q/2}| =(q^{2}-1)(q^{3}+1)$. Let $\mathcal{B}_{0}(H)$ be the subset of blocks of $\Dmc_{0}$
    incident with $H$. 
    Clearly, $\mathcal{B}_{0}(H)=\{ Ky : y \in
    H\}$, which is a $H$-orbit. Then $\left( \mathcal{O}_{j},\mathcal{B}%
    _{0}(H)\right) $, with $j=1,...,q/2$, is a $1$-design with parameter set $%
    (v_{j},b_{j},r_{j},k_{j})$  such that $b_{j}k_{j}=v_{j}r_{j}$, $j=1,...,q/2$%
    , by \cite[1.2.6]{b:Dembowski}. Note that $b_{j}=r_{0}=(q+1)(q^{3}+1)$ for each $%
    j=1,...,q/2$. Then 
    \begin{align}
        (q+1)(q^{3}+1)k_{j}=\left\{ 
        \begin{array}{ll}
            q^{2}(q^{3}+1)r_{j}, & j=1,...,q/2-1; \\ 
            (q^{2}-1)(q^{3}+1)r_{q/2}, & j=q/2.%
        \end{array}%
        \right.  \label{rel}
    \end{align}%
    Further, 
    \begin{equation}
        \sum_{j=1}^{q/2}k_{j}=k-1=q^{3}/2-1 \mathit{.}  \label{rel2}
    \end{equation}%
    It follows from (\ref{rel}) that $k_{j}=q^{2}k_{j}^{\prime }$ with $%
    k_{j}^{\prime }\geq 1$ for $j=1,...,q/2-1$ and $q-1\mid k_{q/2}$. Then by \eqref{rel2}, we have that  
    \[
    \sum_{j=1}^{q/2-1}q^{2}k_{j}^{\prime }+(k_{q/2}+1)=q^{3}/2, 
    \]
    and so $k_{q/2}+1=\mu q^{2}$ for some $\mu \geq 1$. Therefore, we get%
    \[
    \sum_{j=1}^{q/2-1}k_{j}^{\prime }+\mu =q/2 
    \]
    with $k_{j}^{\prime }\geq 1$ for $j=1,...,q/2-1$, and $\mu \geq 1$. Hence, $%
    k_{j}^{\prime }=\mu =1$ for $j=1,...,q/2-1$. Thus $k_{j}=q^{2}$ for $%
    j=1,...,q/2-1$ and $k_{q/2}=q^{2}-1$. Therefore, $r_{j}=q+1$ for each $%
    j=1,...,q/2$, and hence in $\Dmc_{0}$, we observe that each pair of points is incidence with $q+1$ blocks, that is to say, $\lambda_{0}=q+1$.  This completes the proof.
\end{proof}

\begin{lemma}\label{lem:exp2}	
    Let $\Dmc$ be a nontrivial $2$-$(v,k,\lambda)$ design with $\lambda$ prime admitting a flag-transitive and point-primitive automorphism group $G$ with socle $T=\G_{2}(q)$ with $q\geq 4$ even. If the parameter set of $\Dmc$ is  $(v,b,r,k,\lambda)=(q^{3}(q^{3}-1)/2,(q+1)(q^{6}-1),(q+1)(q^{3}+1),q^{3}/2,q+1)$, then $\Dmc$ is isomorphic to $\Dmc_{0}$.
\end{lemma}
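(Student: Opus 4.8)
The plan is to reconstruct $\Dmc$ as a coset geometry for the socle and then to match its point- and block-stabilisers with $H$ and $K$. By Lemma~\ref{lem:3.8} the socle $T=\G_2(q)$ is itself flag-transitive on $\Dmc$, so fixing a flag $(\alpha,B)$ with $\alpha\in B$ and applying \cite[Lemmas 1 and 2]{a:HigMcL-61} we obtain $\Dmc\cong\cos(T,T_\alpha,T_B)$. Since $\Dmc_0=\cos(T,H,K)$, it therefore suffices to prove that the pair $(T_\alpha,T_B)$ is $T$-conjugate to $(H,K)$ in a way compatible with the common flag, that is, with $T_\alpha\cap T_B$ corresponding to $H\cap K=Q$.

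First I would pin down the point-stabiliser. By Lemma~\ref{lem:3.8} (the given parameter set arises only from the case $q$ even, $\e=-$, $t=1$ treated there), the point-stabiliser is $T_\alpha\cong H=\SU_3(q){:}\Zbb_2$, and by \cite[Table 8.30]{b:BHR-Max-Low} these form a single $T$-class, so after replacing $\Dmc$ by an isomorphic copy I may assume $T_\alpha=H$. Transitivity of $T$ on blocks then gives $|T_B|=|T|/b=q^6(q-1)$, and as $q-1$ is odd $T_B$ contains a full Sylow $2$-subgroup of $T$; moreover the flag-stabiliser has order $|T_{\alpha,B}|=|T_\alpha|/r=2q^3(q-1)=|Q|$, and $T_{\alpha,B}=H\cap T_B$ contains a Sylow $2$-subgroup of $H$.

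The core of the argument is to show that $T_{\alpha,B}$ is $H$-conjugate to $Q$; granting this, one translates $B$ by an element of $H$ so that $T_{\alpha,B}=Q\leq T_B$, and since $|T_B|=q^6(q-1)$ Lemma~\ref{Unique} identifies $T_B$ as the unique such subgroup containing $Q$, namely $T_B=K$. To obtain the conjugacy I would pass to the induced action of $H$ on the Hermitian unital $\Hmc$ (through $\bar{H}=\PSU_3(q){:}\Zbb_2$ on $\Smc$), where the unipotent radical fixing an absolute point is the Sylow $2$-subgroup of $\bar{H}'$ and, by the first of the two facts recorded before Lemma~\ref{Unique}, singles out a unique line of $\Smc$ lying in $\Hmc$. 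Because $T_{\alpha,B}$ contains a Sylow $2$-subgroup of $H$, I would argue that it fixes such a point of $\Hmc$, hence lies in the corresponding maximal parabolic extended by the graph-field involution; as the maximal torus of that parabolic is cyclic it contains a unique subgroup $C=\Zbb_{q-1}$ of order $q-1$, and a direct order comparison then forces $T_{\alpha,B}=(P{:}C){:}\Zbb_2=Q$. Combined with the preceding paragraph this gives $(T_\alpha,T_B)=(H,K)$ with $\alpha\in B$, whence $\Dmc\cong\cos(T,H,K)=\Dmc_0$.

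The step I expect to be the main obstacle is precisely this reduction to the parabolic: showing that a subgroup of $H=\SU_3(q){:}\Zbb_2$ of order $2q^3(q-1)$ containing a Sylow $2$-subgroup must fix a point of $\Hmc$ --- equivalently, that its Sylow $2$-subgroup determines a unique invariant absolute point of the Hermitian curve. This rests on the subgroup structure of $\SU_3(q)$ and its action on $\PG_2(q^2)$ (cf.\ \cite[Satz II.10.12]{b:Hupp-I} and \cite[Table 8.5]{b:BHR-Max-Low}), with the cyclicity of the maximal torus providing the final uniqueness, exactly as $Q$ does in Lemma~\ref{Unique}.
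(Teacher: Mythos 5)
Your proposal follows the paper's proof essentially step for step: flag-transitivity of $T$ from Lemma~\ref{lem:3.8}, identification of $T_{\alpha}$ with $H$ via the unique conjugacy class in \cite[Table 8.30]{b:BHR-Max-Low}, identification of $T_{\alpha,B}$ with $Q$ up to $H$-conjugacy, then $T_{B}=K$ from Lemma~\ref{Unique}, and the conclusion via \cite[Lemma 1]{a:HigMcL-61}. The only difference is that where the paper disposes of the middle step by directly citing \cite[Satz II.10.12]{b:Hupp-I} for the uniqueness of the $H$-class of subgroups $([q^{3}]{:}\Zbb_{q-1}){:}\Zbb_{2}$, you sketch the underlying Sylow/parabolic argument on the Hermitian unital that this citation encapsulates.
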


\begin{proof}
    Let $(\alpha,B)$ be a flag of $\Dmc=(\Pmc,\Bmc)$. Then $T_{\alpha}=\SU_{3}(q):\Zbb_{2}$, $T_{B}=[q^{6}]:\Zbb_{q-1}$ and 
    $T_{\alpha,B}\cong ([q^{3}]:\Zbb_{q-1}):\Zbb_{2}$ by \cite[Lemmas 1 and 2%
    ]{a:HigMcL-61} since $T$ acts flag-transitively on $\mathcal{D}$ by Lemma \ref{lem:3.8}. By \cite[Table 8.30]{b:BHR-Max-Low}, since $G$ has a unique conjugacy
    class of subgroups isomorphic to $\SU_{3}(q):\Zbb_{2}$ acting transitively on $\mathcal{P}$, we may assume that $T_{\alpha}=H$. Moreover, $H$ has a unique conjugacy class of subgroups isomorphic to $([q^{3}]:\Zbb_{q-1}):\Zbb_{2}$ by \cite[Satz II.10.12]{b:Hupp-I}. Thus we may assume $T_{\alpha,B}=Q$. Then $T_{B}=K$ by Lemma \ref{Unique}. Therefore, \cite[Lemma 1]{a:HigMcL-61} implies that $\Dmc$ is isomorphic to $\Dmc_{0}$.
\end{proof}

\begin{proof}[\rm \bf Proof of Theorem \ref{thm:exp}]
    Let $G$ be a flag-transitive almost simple automorphism group of $\Dmc$ with socle a finite exceptional simple group $T$. Suppose that $G$ is point-primitive. If $T\neq \G_{2}(q)$, then by the arguments given in \cite{a:Zhang-LamP-Exp}, we obtain part (a) of Theorem \ref{thm:exp} follows. This design is (isomorphic to) the Suzuki-Tits ovoid design explicitly constructed in \cite{a:A-Sz} using the natural action of Suzuki group on Suzuki-Tits ovoid in $\PG_{3}(q)$, see also \cite{a:A-Exp-CP,a:Zhou-Exp-CP}. If $T= \G_{2}(q)$, then in Lemma \ref{lem:3.8}, we revisited the proof of Lemma~3.8 in \cite{a:Zhang-LamP-Exp}, and in conclusion we obtain the parameter set given in part (b) of the theorem and that $T$ acts flag-transitively on $\mathcal{D}$. The existence and uniqueness of the design with this parameter set is followed by Lemmas \ref{lem:exp1} and \ref{lem:exp2}. In fact, the design in this case is (isomorphic to) the coset geometry $\cos(T,H,K)$, where $H=\SU_{3}(q):\Zbb_{2}$ and $K=[q^6]:\Zbb_{q-1}$.
    
    In order to complete the proof, we need to show that any $T \unlhd G \leq \rm{Aut}(T)$ is isomorphic to a group of automorphisms of $\cos(T,H,K)$. Firstly, note that $G=T:\left\langle \phi \right\rangle $ where $\phi $ is a $2$-element of order a divisor of $\log_{2}q$ since $\lambda =q+1$ is a Fermat prime. Since $T$ contains a unique conjugate class of subgroups isomorphic to $\SU_{3}(q):\mathbb{Z}_{2}$, we may assume that $\left\langle \phi \right\rangle $ normalizes a copy of $\SU_{3}(q):\mathbb{Z}_{2}$, say $H_{0}$. In particular, $\left\langle \phi \right\rangle $ normalizes one of its $q^{3}+1$ maximal subgroups isomorphic to $([q^{3}]:\mathbb{Z}_{q-1}):\mathbb{Z}_{2}$, say $Q_{0}$, and hence $%
    \left\langle \phi \right\rangle $ normalizes the unique subgroup of $T$
    isomorphic to $[q^{6}]:\mathbb{Z}_{q-1}$ containing $Q_{0}$ by Lemma \ref%
    {Unique} applied to $Q_{0}$. Clearly, there is $\gamma \in T$ such that $H^{\gamma }=H_{0}$,
    and there is $\delta \in H_{0}$ such that $Q^{\gamma \delta }=Q_{0}$. 
    Set $\vartheta :=\gamma \delta $. Then $H^{\vartheta }=H_{0}$ and $Q^{\vartheta
    }=Q_{0}$, and hence $K^{\vartheta }=K_{0}$ again by Lemma \ref{Unique}. Now, we define $\bar{\vartheta}:\cos(T,H,K) \rightarrow \cos(T,H_{0},K_{0})$ given by 
    $(Hx,Ky)^{\bar{\vartheta}}=\left( H_{0}x^{\vartheta},K_{0}y^{\vartheta }\right)$. Then $\bar{\vartheta}$
    is a bijection such that $Hx\cap Ky\neq \varnothing $ if
    and only if $H_{0}x^{\vartheta }\cap K_{0}y^{\vartheta }=\left( Hx\cap
    Ky\right) ^{\vartheta }\neq \varnothing $. Thus $\bar{\vartheta}$ is an
    isomorphism from $\cos(T,H,K)$ onto $\cos(T,H_{0},K_{0})$. 
    
    Let $H_{0}x$ and $H_{0}y$ with $x,y\in T$ be two right cosets of $H_0$ and $K_{0}$ in $T$, respectively, such that $H_{0}\left\langle \phi \right\rangle x=H_{0}\left\langle \phi
    \right\rangle y$. Then $xy^{-1}\in H_{0}\left\langle \phi
    \right\rangle \cap T=H_{0}$, and hence $H_{0}x=H_{0}y$. Therefore, $H_{0}x=H_{0}y$ if and only if $H_{0}\left\langle \phi \right\rangle x=H_{0}\left\langle \phi \right\rangle y$ for $x,y\in T$. Further, 
    $\left\vert T:H_{0}\right\vert =\left\vert T\left\langle \phi \right\rangle
    :H_{0}\left\langle \phi \right\rangle \right\vert =q^{3}(q^{3}-1)/2$. Hence $\left\{ H_{0}\left\langle \phi \right\rangle x : x\in T\right\}$ is the set of all right cosets of $H_{0}\left\langle \phi \right\rangle$ in $G$. Similarly, $\left\{ K_{0}\left\langle
    \phi \right\rangle y : y\in T\right\} $ is the set of all right cosets of $K_{0}\left\langle \phi \right\rangle $ in $G$. 
    Finally, if $H_{0}x\cap
    K_{0}y\neq \varnothing $ with $x,y \in T$, then $xy^{-1}\in H_{0}K_{0} \subseteq H_{0}K_{0}\left\langle \phi \right\rangle=H_{0}\left\langle \phi \right\rangle K_{0}\left\langle \phi \right\rangle$ since $\left\langle \phi \right\rangle$ normalizes $K_{0}$, and hence $H_{0}\left\langle \phi
    \right\rangle x\cap K_{0}\left\langle \phi \right\rangle y\neq \varnothing $ since also $H_{0}$ is normalized by $\left\langle \phi \right\rangle$. Conversely, $H_{0}\left\langle \phi
    \right\rangle x\cap K_{0}\left\langle \phi \right\rangle y\neq \varnothing $ with $x,y \in T$ implies $xy^{-1}\in H_{0}K_{0}\left\langle \phi \right\rangle \cap T=H_{0}K_{0}$ and hence $H_{0}x\cap K_{0}y\neq \varnothing $. Therefore the map  $\bar{\mu}:\cos(T,H_{0},K_{0}) \rightarrow \cos(G,H_{0}\left\langle \phi \right\rangle ,K_{0}\left\langle \phi \right\rangle)$ given by $(H_{0}x,K_{0}y)\mapsto \left( H_{0}\left\langle\phi \right\rangle x,K_{0}\left\langle \phi \right\rangle y\right)$ is an
    isomorphism from $\cos(T,H_{0},K_{0})$ onto $\cos(G,H_{0}\left\langle \phi \right\rangle ,K_{0}\left\langle \phi \right\rangle)$, and hence $\bar{\vartheta}\bar{\mu}$
    is an isomorphism from $\cos(T,H_{0},K_{0})$ onto $\cos(G,H_{0}\left\langle \phi \right\rangle ,K_{0}\left\langle \phi \right\rangle)$. Further, $G$ is a group of automorphisms (right translations) of $\cos(G,H_{0}\left\langle \phi \right\rangle ,K_{0}\left\langle \phi \right\rangle)$. Thus $G$ is isomorphic to a group of automorphisms of $\cos(T,H,K)$, and hence the proof is completed.
\end{proof}

\section{Sporadic simple groups}\label{proof:spor}

In this section, we prove Theorem \ref{thm:spor}. Here, we use GAP \cite{GAP4} for computations, in particular, we use stored groups library and software packages ``\verb|AtlasRep|'' and ``\verb|Design|''.

\begin{proof}[\rm \bf Proof of Theorem \ref{thm:spor}]
    
    Suppose that $\Dmc = (\Pmc, \Bmc)$ is a nontrivial $2$-$(v,k,\lambda)$ design admitting a flag-transitive and point-primitive automorphism group $G$. Suppose also that $G$ is an almost simple group whose socle is a sporadic simple group. 
    We obtain no example for symmetric designs by \cite[Theorem 1.1]{a:AD-spor-ft}. 
    Thus we always assume that $\Dmc$ is non-symmetric, that is to say, $v<b$ or $k<r$ (by Fisher's inequality \cite{b:Beth-I}). 
    Since $G$ is point-primitive, any point-stabiliser of $G$ is maximal in $G$ \cite[Corollary~1.5A]{b:Dixon}.
    If $\lambda=2$, then we obtain one unique design in line 1 of Table~\ref{tbl:main} with parameter set  $(v,b,r,k,\lambda)=(176, 1100, 50, 8, 2 )$ when $G=\HS$. In addition, the case where $\lambda$ does not divide $r$, that is to say, $\gcd(r,\lambda)=1$, has been treated in \cite{a:Zhan-CP-nonsym-sprodic}, and we obtain the designs on lines $2$-$3$. Therefore, we focus on the case where $\lambda\geq 3$ is a prime divisor of the parameter $r$. 
    
    If  $(\alpha,B)$ is a flag of $\Dmc$, then $H:=G_{\alpha}$ is maximal in $G$ and  the block-stabiliser $K:=G_{B}$ is contained in some maximal subgroup $N$ of $G$. In what follows, we frequently use the following facts: 
    \begin{enumerate}[\rm \quad (i)]
        \item $r(k-1)=\lambda(v-1)$;
        \item $rv=bk$;
        \item $\lambda v<r^2$;
        \item $v=|G:H|$, $b=|G:K|$, $r=|H:L|$ and $k=|K:L|$, where $L:=G_{\alpha,B}=H_{B}=K_{\alpha}$;
        \item $r\mid |H|$ and $|G|<|H|^{3}$;
        \item $r\mid \lambda e$, for all nontrivial subdegrees $e$ of $G$;
        \item $K$ is transitive on $B$, and so $B$ is a $K$-orbit.  
    \end{enumerate}
    We note here that the properties (iv)-(vii) follow from the fact that $G$ is flag-transitive, see for example \cite{a:Davies-87} and \cite[Lemma~2.1 and Corollary~2.1]{a:ABCD-PrimeRep}.  We now follow the steps below and we prove that the case where $\lambda\geq 3$ is a prime divisor of $r$ leads to no possible parameter set. \smallskip
    
    \noindent \textbf{(1)} Since $H$ is a maximal subgroup of $G$ and all maximal subgroups of sporadic simple groups are known by \cite{b:Atlas,a:DLP-Monster}, we know all possibilities for $H$. Since also $H$ is a large subgroup of $G$, that is to say $|G|\leq |H|^{3}$, the subgroup $H$ is known for each group $G$, see \cite[Theorem 3 and Proposition~6.2]{a:AB-Large-15}. Indeed, all maximal subgroups of the groups listed as in \eqref{eq:1} are large, and the list of large maximal subgroups of the remaining groups $G$ are recorded in Table~\ref{tbl:large}. 
    \begin{align}\label{eq:1}
        \nonumber	&\M_{11}, \M_{12}, \M_{12}{:}2, 	\M_{22}, \M_{22}{:}2, \M_{23}, \J_{2}{:}2,  \J_{3}, \\ 
        &\HS,	\HS{:}2,	\McL, \McL{:}2, \He{:}2, \Fi_{22}, \Fi_{22}{:}2, 	\HN{:}2. 
    \end{align}
    
    \begin{table}
        \scriptsize
        \caption{Large maximal subgroups of some almost sporadic  simple groups.}\label{tbl:large}
        \begin{tabular}{lp{15cm}}
            \noalign{\smallskip}\hline\noalign{\smallskip}
            Group & Subgroups \\
            \noalign{\smallskip}\hline\noalign{\smallskip}
            %
            %
            %
            $\M_{24}$ &  $\M_{23}$, $\M_{22}.2$, $2^4:\A_{8}$, $\M_{12}.2$, $2^6:3.\S_{6}$, $\L_3(4).3.2_2$, 
            $2^6:(\L_{3}(2){\times}\S_{3})$, $\L_{2}(23)$  \\ 
            $\J_1$ &  $\L_2(11)$, $2^3.7.3$, $2{\times}\A_5$, $19:6$, $11:10$, $\D_6{\times}\D_{10}$  \\ 
            $\J_{2}$ &  $\U_3(3)$, $3.\A_6.2_2$, $2^{1+4}:\A_5$, $2^{2+4}.3{\times}\S_3$, $\A_4{\times}\A_5$, 
            $\A_5{\times}\D_{10}$, $\L_3(2).2$, $5^2:\D_{12}$  \\ 
            $\J_3:2$ &  $\J_3$, $\L_{2}(16).4$, $2^4:(3{\times}\A_{5}).2$, $\L_{2}(17){\times}2$, $(3{\times}\M_{10}):2$, 
            $3^2.3^{1+2}:8.2$, $2^{1+4}.\S_{5}$, $2^{2+4}:(\S_{3}{\times}\S_{3})$  \\ 
            $\He$ &  $\S_{4}(4).2$, $2^2.\L_{3}(4).\S_{3}$, $2^6:3.\S_{6}$, $2^6:3.\S_{6}$, $2^1+6.\L_{3}(2)$, 
            $7^2:2\L_{2}(7)$, $3.\A_{7}.2$, $7^{1+2}:(\S_{3}{\times}3)$, $\S_{4}{\times}\L_{3}(2)$, $7:3{\times}\L_{3}(2)$ 
            \\ 
            $\Ru$ &  $^{2}\F_{4}(2)'.2$, $2^6:\U_{3}(3):2$, $(2^2{\times}\Sz(8)):3$, $2^3+8:\L_3(2)$, $\U_3(5).2$, 
            $2.2^4+6:\S_{5}$, $\L_{2}(25).2^2$, $\A_{8}$, $\L_{2}(29)$, $5^2:4\S_{5}$  \\ 
            $\Suz$ &  $\G_{2}(4)$, $3_2.\U_4(3).2_3'$, $\U_{5}(2)$, $2^{1+6}.\U_{4}(2)$, $3^5:\M_{11}$, $\J_{2}.2$, 
            $2^{4+6}:3\A_{6}$, $(\A_{4}{\times}\L_{3}(4)):2$, $2^2+8(\A_{5}{\times}\S_{3})$, $\M_{12}.2$, 
            $3^{2+4}:2(2^2{\times}\A_{4})2$, $(\A_{6}{\times}\A_{5}).2$, $(3^2:4{\times}\A_{6}).2$, $\L_{3}(3).2$, 
            $\L_{3}(3).2$, $\L_{2}(25)$  \\ 
            $\Suz.2$ &  $\Suz$, $\G_2(4).2$, $3_2.\U_4(3).(2^2)_{133}$, $\U_{5}(2).2$, $2^{1+6}_-.\U_4(2).2$, 
            $3^5:(\M_{11}{\times}2)$, $\J_2.2{\times}2$, $2^{4+6}:3\S_{6}$, $(\A_{4}{\times}\L_3(4):2_3):2$, 
            $2^{2+8}:(\S_{5}{\times}\S_{3})$, $\M_{12}.2{\times}2$, $3^{2+4}:2(\S_{4}{\times}\D_8)$, 
            $(\A_{6}:2_2{\times}\A_{5}).2$, $(3^2:8{\times}\A_{6}).2$, $\L_{2}(25).2_2$  \\ 
            $\ON$ &  $\L_{3}(7).2$, $\L_{3}(7).2$, $\J_{1}$, $4_2.\L_3(4).2_1$, $(3^3:4{\times}\A_{6}).2$, 
            $3^4:2^{1+4}\D_{10}$, $\L_{2}(31)$, $\L_{2}(31)$, $4^3.\L_3(2)$, $\M_{11}$, $\M_{11}$  \\ 
            $\Co_3$ &  $\McL.2$, $\HS$, $\U_4(3).(2^2)_{133}$, $\M_{23}$, $3^5:(2{\times}\M_{11})$, $2.\S_{6}(2)$, 
            $\U_3(5).3.2$, $3^{1+4}:4\S_{6}$, $2^4.\A_{8}$, $\L_3(4).\D_{12}$, $2{\times}\M_{12}$, $2^2.(2^7.3^2).\S_{3}$, 
            $\S_{3}{\times}\L_{2}(8).3$  \\ 
            $\ON.2$ &  $\ON$, $\J_1{\times}2$, $4_2.\L_3(4).2.2$, $(3^2:4{\times}\A_{6}).2^2$, $3^4:2^{1+4}.(5:4)$, 
            $4^3.(\L_3(2){\times}2)$, $7^{1+2}_{+}:(3{\times}\D_{16})$  \\ 
            $\Co_2$ &  $\U_6(2).2$, $2^{10}:\M_{22}:2$, $\McL$, $2^{1+8}:\S_{6}(2)$, $\HS.2$, $(2^{4}{\times}2^{1+6}).\A_{8}$, 
            $\U_4(3).\D_8$, $2^{4+10}(\S_{5}{\times}\S_{3})$, $\M_{23}$, $3^{1+4}:2^{1+4}.\S_{5}$  \\ 
            $\HN$ &  $\A_{12}$, $2.\HS.2$, $\U_3(8).3_1$, $2^{1+8}.(\A_{5}{\times}\A_{5}).2$, $(\D_{10}{\times}\U_3(5)).2$, 
            $5^{1+4}:2^{1+4}.5.4$, $2^6.\U_4(2)$, $(\A_{6}{\times}\A_{6}).\D_8$, $2^3.2^2.2^6.(3{\times}\L_3(2))$, 
            $5^2.5.5^2.4\A_{5}$, $\M_{12}.2$, $\M_{12}.2$, $3^4:2(\A_{4}{\times}\A_{4}).4$  \\ 
            $\Ly$ &  $\G_2(5)$, $3.\McL.2$, $5^3.\L_{3}(5)$, $2.A_{11}$, $5^{1+4}:4\S_{6}$, $3^5:(2 {\times} M_{11})$, 
            $3^{2+4}:2\A_{5}.\D_8$  \\ 
            $\Th$ &  $^{3}\D_4(2).3$, $2^5.L_5(2)$, $2^{1+8}.\A_9$, $\U_3(8).6$, $(3{\times}\G_2(3)):2$, 
            $3.3^2.3.(3 \times 3^2).3^2:2\S_4$, $3^2.3^3.3^2.3^2:2\S_4$  \\ 
            $\Fi_{23}$ &  $2.\Fi_{22}$, $\O_8^+(3).3.2$, $2^2.\U_6(2).2$, $\S_{8}(2)$, $\S_{3}{\times}\O_7(3)$, 
            $2^{11}.\M_{23}$, $3^{1+8}.2^{1+6}.3^{1+2}.2\S_{4}$, $3^{10}.(\L_3(3) {\times} 2)$, $\S_{12}$, 
            $(2^2{\times}2^{1+8}).(3{\times}\U_4(2)).2$, $2^{6+8}:(\A_{7}{\times}\S_{3})$, $\S_{4}{\times}\S_{6}(2)$, 
            $\S_{4}(4).4$  \\ 
            $\Co_1$ &  $\Co_2$, $3.\Suz.2$, $2^{11}:\M_{24}$, $\Co_3$, $2^{1+8}_{+}.\O_8^+(2)$, $\U_6(2).3.2$, 
            $(\A_{4}{\times}\G_2(4)):2$, $2^{2+12}:(\A_{8}{\times}\S_{3})$, $2^{4+12}.(\S_{3}{\times}3\S_{6})$, 
            $3^2.\U_4(3).\D_8$, $3^6:2\M_{12}$, $(\A_{5}{\times}\J_2):2$, $3^{1+4}.2\U_4(2).2$, 
            $(\A_{6}{\times}\U_{3}(3)):2$, $3^{3+4}:2(\S_{4}{\times}\S_{4})$  \\ 
            $\J_4$ &  $2^{11}:\M_{24}$, $2^{1+12}.3.\M_{22}:2$, $2^{10}:\L_5(2)$, $2^{3+12}.(\S_5 {\times} \L_3(2))$, 
            $\U_3(11).2$  \\ 
            $\Fi_{24}'$ &  $\Fi_{23}$, $2.\Fi_{22}.2$, $(3{\times}\O_{8}^{+}(3):3):2$, $\O_{10}^{-}(2)$, $3^7.\O_7(3)$, 
            $3^{1+10}:\U_{5}(2):2$, $2^{11}:\M_{24}$, $2^2.\U_6(2).\S_{3}$, $2^{1+12}.3_1.\U_4(3).2_2'$, 
            $3^3.3^{10}.\GL_{3}(3)$, $3^2.3^4.3^8.(\A_{5}{\times}2\A_{4}).2$, $(\A_{4}{\times}\O_8^+(2).3).2$, $\He.2$, 
            $\He.2$, $2^{3+12}.(\L_3(2){\times}\A_{6})$, $2^{6+8}.(\S_{3}{\times}\A_{8})$, $(3^2:2{\times}\G_2(3)).2$  \\ 
            $\Fi_{24}':2$ &  $\Fi_{24}'$, $2{\times}\Fi_{23}$, $2^2.Fi_{22}.2$, $\S_{3}{\times}\O_8^{+}(3):\S_{3}$, 
            $\O_{10}^{-}(2).2$, $3^7.\O_7(3):2$, $3^{1+10}:(2{\times}\U_{5}(2):2)$, $2^{12}.\M_{24}$, 
            $2^2.\U_6(2):\S_{3}{\times}2$, $2^{1+12}_{+}.3_1.\U_4(3).2^2_{122}$, $3^3.3^{10}.(\L_{3}(3){\times}2^2)$, 
            $3^2.3^4.3^8.(\S_{5}{\times}2\S_{4})$, $\S_{4}{\times}\O_8^{+}(2):\S_{3}$, $2^{3+12}.(\L_3(2){\times}\S_{6})$, 
            $2^{7+8}.(\S_{3}{\times}\A_{8})$, $(\S_{3}{\times}\S_{3}{\times}\G_2(3)):2$  \\ 
            $\B$ &  $2.^2\E_6(2).2$, $2^{1+22}.\Co_2$, $\Fi_{23}$, $2^{9+16}.\S_{8}(2)$, $\Th$, $(2^2{\times}\F_4(2)):2$, 
            $2^{2+10+20}.(\M_{22}.2{\times}\S_{3})$, $2^{30}.\L_5(2)$, $\S_{3}{\times}\Fi_{22}.2$, 
            $2^{35}.(\S_{5}{\times}\L_3(2))$, $\HN.2$, $\O_8^{+}(3).\S_{4}$  \\ 
            $\M$ &  $2.\B$, $2^{1+24}.\Co_1$, $3.\Fi_{24}$, $2^2.^2\E_6(2):\S_{3}$, $2^{10+16}.\O_{10}^{+}(2)$, 
            $2^{2+11+22}.(\M_{24}{\times}\S_{3})$, $3^{1+12}.2\Suz.2$, $2^{5+10+20}.(\S_{3}{\times}\L_5(2))$  \\ 
            %
            \noalign{\smallskip}\hline\noalign{\smallskip}
        \end{tabular}  
    \end{table}
    
    \noindent \textbf{(2)} For each group $G$, since $\lambda$ is an odd prime divisor of $|G|$, we easily find the possibilities of $\lambda$. Fixing a subgroup $H$ obtained in (1), we use (iv), and so  $v=|G:H|$. For each prime $\lambda$, by (i) and (v), the parameter $r$ is a divisor of $\gcd(\lambda(v-1), |H|)$, and then we can find $k=1+\lambda(v-1)/r$ and $b=rv/k$ by taking into account of the conditions $v<b$ and $\lambda\mid r$ 
    Therefore, for each pair $(G,H)$, we obtain all possible parameter sets. For each group, the number of such candidate parameter sets is recorded in Table \ref{tbl:stats} below, and in total, there are $124$ possibilities $[G,H,(v,b,r,k,\lambda)]$ recorded in Table~\ref{tbl:poss} for further consideration. In Table~\ref{tbl:poss}, the number $\nr(H)$ indicates the number associated to the maximal subgroup $H$ of $G$ in the library of software package ``\verb|AtlasRep|'' in GAP \cite{GAP4}.\smallskip
    
    \begin{table}
        \scriptsize
        \caption{The number of parameter sets for sporadic almost simple groups.}\label{tbl:stats}
        \begin{tabularx}{\textwidth}{lc|lc|lc|lc|lc}
            \noalign{\smallskip}\hline\noalign{\smallskip}
            Group & \# parameters &
            Group & \# parameters &
            Group & \# parameters &
            Group & \# parameters &
            Group & \# parameters \\
            \noalign{\smallskip}\hline\noalign{\smallskip}
            $\M_{11}$ & $4$ & 
            $\J_2$ & $3$ & 
            $\Suz : 2$ & $0$ & 
            $\ON : 2$ & $0$ &
            $\Fi_{24}' : 2$ & $0$ \\ 
            $\M_{12}$ & $0$ & 
            $\J_{2} : 2$ & $3$ & 
            $\McL$ & $6$ &  
            $\Co_{1}$ & $0$ & 
            $\HN$ & $0$ \\ 
            $\M_{12} : 2$ & $0$ & 
            $\J_{3}$ & $0$ & 
            $\McL : 2$ & $0$ & 
            $\Co_{2}$ & $0$ & 
            $\HN : 2$ & $0$ \\ 
            $\M_{22}$ & $5$ & 
            $\J_{3} : 2$ & $0$ &
            $\Ru$ & $0$ & 
            $\Co_3$ & $9$ 
            & $\Th$ & $0$ \\
            $\M_{22} : 2$ & $3$ & 
            $\J_{4}$ & $0$ & 
            $\He$ & $0$ & 
            $\Fi_{22}$ & $0$ & 
            $\B$ & $0$ \\
            $\M_{23}$ & $43$ & 
            $\HS$ & $19$ & 
            $\He : 2$ & $0$ & 
            $\Fi_{22} : 2$ & $0$ 
            & $\M$ & $0$ \\
            $\M_{24}$ & $8$ & 
            $\HS : 2$ & $9$ & 
            $\Ly$ & $0$ & 
            $\Fi_{23}$ & $0$ & \\
            $\J_{1}$ & $6$ & 
            $\Suz$ & $0$ & 
            $\ON$ & $6$ & 
            $\Fi_{24}'$ & $0$ & \\
            \noalign{\smallskip}\hline\noalign{\smallskip}
        \end{tabularx} 
    \end{table}
  
    \noindent \textbf{(3)} Since $K$ is contained in a maximal subgroup $N$ of $G$, it follows that $b=|G:K|=|G:N|\cdot|N:K|$, and so the index of a maximal subgroup of $G$ must divide $b$. Then for each tuple $[G,H,(v,b,r,k,\lambda)]$, this gives a list of candidates for maximal subgroups $N$. All these possibilities are recorded in Tables~\ref{tbl:poss-subdeg} and \ref{tbl:poss-final}. Then we find the subdegrees of such a group $G$ in its right coset action on the set of right cosets of $H$. Since the degree $v$ of the permutation representation of $G$ in this action is at most $2025$, we can use the library of primitive permutation groups in GAP \cite{GAP4}. For each $[G,H,(v,b,r,k,\lambda)]$ as in one of the rows of Tables~\ref{tbl:poss-subdeg}, we present the subdegrees in which the notation $m^{n}$ means that the multiplicity of the subdegree $m$ is $n$. However, all the possibilities recorded in Tables~\ref{tbl:poss-subdeg} can be ruled out as for each row, (the smallest nontrivial) subdegree in the last column of the table violates the property (vi) which says that $r$ is a divisor of $\lambda e$, for all nontrivial subdegrees $e$. This leaves the possibilities in Table~\ref{tbl:poss-final} for further inspection. For convenience, for each row in this table, in the third and fifth columns, we record the numbers $\nr(H)$ and $\nr(N)$  which  indicate the numbers associated to the maximal subgroups  $H$ and $N$ of $G$, respectively,  in the library of software package ``\verb|AtlasRep|'' in GAP \cite{GAP4}. \smallskip
    
    We now analyse each possibility listed in Table~\ref{tbl:poss-final}. These are $166$ candidates in total. For $[G,H,(v,b,r,k,\lambda)]$, we first consider the group $G$ as a primitive permutation group on $v$ points.
    we know that $G$ must have a subgroup $K$ of index $b$, and if there exist such a subgroup $K$ of $G$, then it needs to be isomorphic to a subgroup of $N$ and it should contain a subgroup $L$ of index $k$, see (iv). By inspecting the subgroups of $G$ using GAP, in the last column of the table, we used the notation ``\nsubG'' to indicate that there exist no subgroups $K$ of $G$ of index $b$. Otherwise, we give the structure of the subgroup $K$ in the sixth column of the table. In the case where $G$ has a subgroup of index $b$, we have two scenarios, either $K$ is not isomorphic to a subgroup of $N$ which we write ``$\nsubN$'', or $K$ may be viewed as a subgroup of $N$ but its index is not $k$ which we record ``$\nsubK$'' in the last column of Table~\ref{tbl:poss-final}.
    
    For the remaining possibilities,  we examine the $K$-orbits with $K$ as a permutation group on $v$ points. This is to see if these orbits give rise to a possible $2$-design. Indeed, since $G$ is flag-transitive, the block set $\Bmc$ must be a $G$-orbit $B^{G}$, where the base block $B$ is a $K$-orbit on $v$ points. This fact rules out the possibilities in Table~\ref{tbl:poss-final} with the notation ``\norb'' recorded in their last column. In conclusion, we obtain five possibilities in which the $K$-orbits are 
    \begin{align*}
        B_1&=\{50, 64, 101, 142, 187, 202, 242\} \text{ when $G=\M_{23}$;}\\
        B_2&=\{4, 38, 63, 134, 162, 200, 215\} \text{ when $G=\M_{23}$;}\\
        B_3&=\{ 5, 34, 69, 149, 201, 221, 243\} \text{ when $G=\M_{23}$;}\\
        B_4&=\{6, 12, 23, 24, 26, 34, 39, 42, 68, 70, 78, 86\} \text{ when $G=\J_{2}$;}\\
        B_5&=\{10, 30, 36, 44, 46, 49, 64, 67, 68, 75, 86, 93\}\text{ when $G=\J_{2}:2$}.
    \end{align*}
    However, by using the software package ``\verb|design|'' in GAP \cite{GAP4}, we conclude that none of these $K$-orbits forms a base block with the associated parameter set $(v,b,r,k,\lambda)$, and so we use the notation ``\ndes'' in the last column of Table~\ref{tbl:poss-final} to address this observation. Therefore, we only obtain the $2$-designs listed in Table~\ref{tbl:main}.       
\end{proof}

\clearpage

\scriptsize

\begin{longtable}{lll>{\raggedright\arraybackslash}p{12cm}} 
    \caption{Possible parameters $(v,b,r,k,\lambda)$ for pairs $(G,H)$.}\label{tbl:poss}  \\ 
    \hline 
    $G$ & $H$ & $\nr(H)$ & $(v,b,r,k,\lambda)$ \\
    \hline 
    \endfirsthead 
    \multicolumn{4}{c}{{\normalsize \scshape \tablename\ \thetable{}} - continued} \\ \\ 
    \hline 
    $G$ & $H$ & $\nr(H)$ & $(v,b,r,k,\lambda)$ \\
    \hline 
    \endhead 
    \hline 
    \multicolumn{4}{r}{{Continued}}\endfoot
    \endlastfoot
    $\M_{11}$ & $\A_6.2_3$ & $1$ & $( 11, 55, 15, 3, 3 )$ \\ 
    $\M_{11}$ & $3^2{:}\Q_{8}.2$ & $3$ & $( 55, 99, 18, 10, 3 )$, $(  55, 165, 30, 10, 5 )$, $(  55, 363, 66, 10, 11 )$ \\ 
    $\J_1$ & $19{:}6$ & $4$ & $( 1540, 15675, 285, 28, 5 )$, $(  1540, 21945, 399, 28, 7 )$, $(  1540, 34485, 627, 28, 11 )$ \\ 
    $\J_1$ & $11{:}10$ & $5$ & $( 1596, 8778, 165, 30, 3 )$, $(  1596, 20482, 385, 30, 7 )$, $(  1596, 55594, 1045, 30, 19 )$ \\ 
    $\M_{22}$ & $\A_7$ & $3$ & $( 176, 3080, 105, 6, 3 )$ \\ 
    $\M_{22}$ & $\A_7$ & $4$ & $( 176, 3080, 105, 6, 3 )$ \\ 
    $\M_{22}$ & $\A_6.2_3$ & $7$ & $( 616, 660, 45, 42, 3 )$, $(  616, 1540, 105, 42, 7 )$, $(  616, 2420, 165, 42, 11 )$ \\ 
    $\J_2$ & $\PSU_3(3)$ & $1$ & $( 100, 225, 27, 12, 3 )$, $(  100, 375, 45, 12, 5 )$, $(  100, 525, 63, 12, 7 )$ \\ 
    $\M_{22}{:}2$ & $\A_6.2^2$ & $6$ & $( 616, 660, 45, 42, 3 )$, $(  616, 1540, 105, 42, 7 )$, $(  616, 2420, 165, 42, 11 )$ \\ 
    $\J_2{:}2$ & $\PSU_3(3).2$ & $2$ & $( 100, 225, 27, 12, 3 )$, $(  100, 375, 45, 12, 5 )$, $(  100, 525, 63, 12, 7 )$ \\ 
    $\M_{23}$ & $\M_{22}$ & $1$ & $( 23, 253, 33, 3, 3 )$ \\ 
    $\M_{23}$ & $\PSL_3(4).2_2$ & $2$ & $( 253, 414, 36, 22, 3 )$, $(  253, 4554, 126, 7, 3 )$, $(  253, 15939, 252, 4, 3 )$, $(  253, 690, 60, 22, 5 )$, $(  253, 1518, 90, 15, 5 )$, $(  253, 3542, 140, 10, 5 )$, $(  253, 7590, 210, 7, 5 )$, $(  253, 15939, 315, 5, 5 )$, $(  253, 26565, 420, 4, 5 )$, $(  253, 53130, 630, 3, 5 )$, $(  253, 966, 84, 22, 7 )$, $(  253, 1518, 132, 22, 11 )$, $(  253, 16698, 462, 7, 11 )$, $(  253, 58443, 924, 4, 11 )$, $(  253, 116886, 1386, 3, 11 )$, $(  253, 3174, 276, 22, 23 )$, $(  253, 34914, 966, 7, 23 )$, $(  253, 122199, 1932, 4, 23 )$, $(  253, 244398, 2898, 3, 23 )$ \\ 
    $\M_{23}$ & $2^4{:}\A_7$ & $3$ & $( 253, 414, 36, 22, 3 )$, $(  253, 4554, 126, 7, 3 )$, $(  253, 15939, 252, 4, 3 )$, $(  253, 690, 60, 22, 5 )$, $(  253, 1518, 90, 15, 5 )$, $(  253, 3542, 140, 10, 5 )$, $(  253, 7590, 210, 7, 5 )$, $(  253, 15939, 315, 5, 5 )$, $(  253, 26565, 420, 4, 5 )$, $(  253, 53130, 630, 3, 5 )$, $(  253, 966, 84, 22, 7 )$, $(  253, 1518, 132, 22, 11 )$, $(  253, 16698, 462, 7, 11 )$, $(  253, 58443, 924, 4, 11 )$, $(  253, 116886, 1386, 3, 11 )$, $(  253, 3174, 276, 22, 23 )$, $(  253, 34914, 966, 7, 23 )$, $(  253, 122199, 1932, 4, 23 )$, $(  253, 244398, 2898, 3, 23 )$ \\ 
    $\M_{23}$ & $\M_{11}$ & $5$ & $( 1288, 5313, 165, 40, 5 )$, $(  1288, 45540, 495, 14, 5 )$, $(  1288, 63756, 693, 14, 7 )$, $(  1288, 209484, 2277, 14, 23 )$ \\ 
    $\HS$ & $\M_{22}$ & $1$ & $( 100, 330, 33, 10, 3 )$, $(  100, 2475, 99, 4, 3 )$, $(  100, 375, 45, 12, 5 )$, $(  100, 550, 55, 10, 5 )$, $(  100, 4125, 165, 4, 5 )$, $(  100, 525, 63, 12, 7 )$, $(  100, 770, 77, 10, 7 )$, $(  100, 5775, 231, 4, 7 )$, $(  100, 825, 99, 12, 11 )$ \\ 
    $\HS$ & $\PSU_3(5).2$ & $2$ & $( 176, 1650, 75, 8, 3 )$, $(  176, 3080, 105, 6, 3 )$, $(  176, 2750, 125, 8, 5 )$, $(  176, 3850, 175, 8, 7 )$, $(  176, 6050, 275, 8, 11 )$ \\ 
    $\HS$ & $\PSU_3(5).2$ & $3$ & $( 176, 1650, 75, 8, 3 )$, $(  176, 3080, 105, 6, 3 )$, $(  176, 2750, 125, 8, 5 )$, $(  176, 3850, 175, 8, 7 )$, $(  176, 6050, 275, 8, 11 )$ \\ 
    $\HS{:}2$ & $\M_{22}.2$ & $2$ & $( 100, 330, 33, 10, 3 )$, $(  100, 2475, 99, 4, 3 )$, $(  100, 375, 45, 12, 5 )$, $(  100, 550, 55, 10, 5 )$, $(  100, 4125, 165, 4, 5 )$, $(  100, 525, 63, 12, 7 )$, $(  100, 770, 77, 10, 7 )$, $(  100, 5775, 231, 4, 7 )$, $(  100, 825, 99, 12, 11 )$ \\ 
    $\M_{24}$ & $\M_{22}.2$ & $2$ & $( 276, 7590, 165, 6, 3 )$, $(  276, 17710, 385, 6, 7 )$, $(  276, 58190, 1265, 6, 23 )$ \\ 
    $\M_{24}$ & $\M_{12}.2$ & $4$ & $( 1288, 27324, 297, 14, 3 )$, $(  1288, 5313, 165, 40, 5 )$, $(  1288, 45540, 495, 14, 5 )$, $(  1288, 63756, 693, 14, 7 )$, $(  1288, 209484, 2277, 14, 23 )$ \\ 
    $\McL$ & $\M_{22}$ & $2$ & $( 2025, 22275, 264, 24, 3 )$, $(  2025, 37125, 440, 24, 5 )$, $(  2025, 51975, 616, 24, 7 )$ \\ 
    $\McL$ & $\M_{22}$ & $3$ & $( 2025, 22275, 264, 24, 3 )$, $(  2025, 37125, 440, 24, 5 )$, $(  2025, 51975, 616, 24, 7 )$ \\ 
    $\ON$ & $\PSL_3(7).2$ & $1$ & $( 122760, 123690, 931, 924, 7 )$, $(  122760, 194370, 1463, 924, 11 )$, $(  122760, 547770, 4123, 924, 31 )$ \\ 
    $\ON$ & $\PSL_3(7).2$ & $2$ & $( 122760, 123690, 931, 924, 7 )$, $(  122760, 194370, 1463, 924, 11 )$, $(  122760, 547770, 4123, 924, 31 )$ \\ 
    $\Co_3$ & $\McL.2$ & $1$ & $( 276, 1725, 75, 12, 3 )$, $(  276, 7590, 165, 6, 3 )$, $(  276, 2875, 125, 12, 5 )$, $(  276, 12650, 275, 6, 5 )$, $(  276, 4025, 175, 12, 7 )$, $(  276, 17710, 385, 6, 7 )$, $(  276, 6325, 275, 12, 11 )$, $(  276, 13225, 575, 12, 23 )$, $(  276, 58190, 1265, 6, 23 )$ \\ 
    \hline
\end{longtable}

\begin{longtable}{lllllllllll} 
    \caption{ The parameters which are ruled out by a subdegree of $G$.}\label{tbl:poss-subdeg}  \\ 
    \hline 
    $G$ & $H$ & $\nr(H)$ & $v$ &  $b$ & $r$ & $k$ & $\lambda$ & Subdegrees &Comment \\
    \hline 
    \endfirsthead 
    \multicolumn{10}{c}{{\normalsize \scshape \tablename\ \thetable{}} - continued} \\ \\ 
    \hline 
    $G$ & $H$ & $\nr(H)$ & $v$ &  $b$ & $r$ & $k$ & $\lambda$ &Subdegrees &Comment \\
    \hline 
    \endhead 
    \hline 
    \multicolumn{10}{r}{{Continued}}\endfoot
    \endlastfoot
    $\M_{23}$ & $2^4{:}\A_{7}$ & $3$ & $253$ & $414$ & $36$ & $22$ & $3$  & $1^{1}112^{1}140^{1}$  & $112$ \\
    $\M_{23}$ & $2^4{:}\A_{7}$ & $3$ & $253$ & $690$ & $60$ & $22$ & $5$  & $1^{1}112^{1}140^{1}$  & $112$ \\
    $\M_{23}$ & $2^4{:}\A_{7}$ & $3$ & $253$ & $966$ & $84$ & $22$ & $7$  & $1^{1}112^{1}140^{1}$  & $112$ \\
    $\M_{23}$ & $2^4{:}\A_{7}$ & $3$ & $253$ & $1518$ & $90$ & $15$ & $5$  & $1^{1}112^{1}140^{1}$  & $112$ \\
    $\M_{23}$ & $2^4{:}\A_{7}$ & $3$ & $253$ & $1518$ & $132$ & $22$ & $11$  & $1^{1}112^{1}140^{1}$  & $112$ \\
    $\M_{23}$ & $2^4{:}\A_{7}$ & $3$ & $253$ & $3174$ & $276$ & $22$ & $23$  & $1^{1}112^{1}140^{1}$  & $112$ \\
    $\M_{23}$ & $2^4{:}\A_{7}$ & $3$ & $253$ & $4554$ & $126$ & $7$ & $3$  & $1^{1}112^{1}140^{1}$  & $112$ \\
    $\M_{23}$ & $2^4{:}\A_{7}$ & $3$ & $253$ & $7590$ & $210$ & $7$ & $5$  & $1^{1}112^{1}140^{1}$  & $112$ \\
    $\M_{23}$ & $2^4{:}\A_{7}$ & $3$ & $253$ & $15939$ & $252$ & $4$ & $3$  & $1^{1}112^{1}140^{1}$  & $112$ \\
    $\M_{23}$ & $2^4{:}\A_{7}$ & $3$ & $253$ & $15939$ & $315$ & $5$ & $5$  & $1^{1}112^{1}140^{1}$  & $112$ \\
    $\M_{23}$ & $2^4{:}\A_{7}$ & $3$ & $253$ & $16698$ & $462$ & $7$ & $11$  & $1^{1}112^{1}140^{1}$  & $112$ \\
    $\M_{23}$ & $2^4{:}\A_{7}$ & $3$ & $253$ & $26565$ & $420$ & $4$ & $5$  & $1^{1}112^{1}140^{1}$  & $112$ \\
    $\M_{23}$ & $2^4{:}\A_{7}$ & $3$ & $253$ & $34914$ & $966$ & $7$ & $23$  & $1^{1}112^{1}140^{1}$  & $112$ \\
    $\M_{23}$ & $2^4{:}\A_{7}$ & $3$ & $253$ & $53130$ & $630$ & $3$ & $5$  & $1^{1}112^{1}140^{1}$  & $112$ \\
    $\M_{23}$ & $2^4{:}\A_{7}$ & $3$ & $253$ & $58443$ & $924$ & $4$ & $11$  & $1^{1}112^{1}140^{1}$  & $112$ \\
    $\M_{23}$ & $2^4{:}\A_{7}$ & $3$ & $253$ & $116886$ & $1386$ & $3$ & $11$  & $1^{1}112^{1}140^{1}$  & $112$ \\
    $\M_{23}$ & $2^4{:}\A_{7}$ & $3$ & $253$ & $122199$ & $1932$ & $4$ & $23$  & $1^{1}112^{1}140^{1}$  & $112$ \\
    $\M_{23}$ & $2^4{:}\A_{7}$ & $3$ & $253$ & $244398$ & $2898$ & $3$ & $23$  & $1^{1}112^{1}140^{1}$  & $112$ \\
    $\M_{23}$ & $\PSL_3(4).2_2$ & $2$ & $253$ & $414$ & $36$ & $22$ & $3$  & $1^{1}42^{1}210^{1}$ & $42$ \\
    $\M_{23}$ & $\PSL_3(4).2_2$ & $2$ & $253$ & $690$ & $60$ & $22$ & $5$  & $1^{1}42^{1}210^{1}$ & $42$ \\
    $\M_{23}$ & $\PSL_3(4).2_2$ & $2$ & $253$ & $966$ & $84$ & $22$ & $7$  & $1^{1}42^{1}210^{1}$ & $42$ \\
    $\M_{23}$ & $\PSL_3(4).2_2$ & $2$ & $253$ & $1518$ & $90$ & $15$ & $5$  & $1^{1}42^{1}210^{1}$ & $42$ \\
    $\M_{23}$ & $\PSL_3(4).2_2$ & $2$ & $253$ & $1518$ & $132$ & $22$ & $11$  & $1^{1}42^{1}210^{1}$ & $42$ \\
    $\M_{23}$ & $\PSL_3(4).2_2$ & $2$ & $253$ & $3174$ & $276$ & $22$ & $23$  & $1^{1}42^{1}210^{1}$ & $42$ \\
    $\M_{23}$ & $\PSL_3(4).2_2$ & $2$ & $253$ & $3542$ & $140$ & $10$ & $5$  & $1^{1}42^{1}210^{1}$ & $42$ \\
    $\M_{23}$ & $\PSL_3(4).2_2$ & $2$ & $253$ & $15939$ & $252$ & $4$ & $3$  & $1^{1}42^{1}210^{1}$ & $42$ \\
    $\M_{23}$ & $\PSL_3(4).2_2$ & $2$ & $253$ & $15939$ & $315$ & $5$ & $5$  & $1^{1}42^{1}210^{1}$ & $42$ \\
    $\M_{23}$ & $\PSL_3(4).2_2$ & $2$ & $253$ & $26565$ & $420$ & $4$ & $5$  & $1^{1}42^{1}210^{1}$ & $42$ \\
    $\M_{23}$ & $\PSL_3(4).2_2$ & $2$ & $253$ & $53130$ & $630$ & $3$ & $5$  & $1^{1}42^{1}210^{1}$ & $42$ \\
    $\M_{23}$ & $\PSL_3(4).2_2$ & $2$ & $253$ & $58443$ & $924$ & $4$ & $11$  & $1^{1}42^{1}210^{1}$ & $42$ \\
    $\M_{23}$ & $\PSL_3(4).2_2$ & $2$ & $253$ & $116886$ & $1386$ & $3$ & $11$  & $1^{1}42^{1}210^{1}$ & $42$ \\
    $\M_{23}$ & $\PSL_3(4).2_2$ & $2$ & $253$ & $122199$ & $1932$ & $4$ & $23$  & $1^{1}42^{1}210^{1}$ & $42$ \\
    $\M_{23}$ & $\PSL_3(4).2_2$ & $2$ & $253$ & $244398$ & $2898$ & $3$ & $23$  & $1^{1}42^{1}210^{1}$ & $42$ \\
    $\M_{23}$ & $\M_{11}$ & $5$ & $1288$ & $45540$ & $495$ & $14$ & $5$  & $1^{1}165^{1}330^{1}792^{1}$  & $165$ \\
    $\M_{23}$ & $\M_{11}$ & $5$ & $1288$ & $63756$ & $693$ & $14$ & $7$  & $1^{1}165^{1}330^{1}792^{1}$  & $165$ \\
    $\M_{23}$ & $\M_{11}$ & $5$ & $1288$ & $209484$ & $2277$ & $14$ & $23$  & $1^{1}165^{1}330^{1}792^{1}$  & $165$ \\
    $\M_{24}$ & $\M_{22}.2$ & $2$ & $276$ & $7590$ & $165$ & $6$ & $3$ & $1^{1}44^{1}231^{1}$  & $44$ \\
    $\M_{24}$ & $\M_{22}.2$ & $2$ & $276$ & $17710$ & $385$ & $6$ & $7$ & $1^{1}44^{1}231^{1}$  & $44$ \\
    $\J_1$ & $19{:}6$ & $4$ & $1540$ & $15675$ & $285$ & $28$ & $5$ & $1^{1}19^{1}38^{4}57^{6}114^{9}$ & $19$ \\
    $\J_1$ & $19{:}6$ & $4$ & $1540$ & $21945$ & $399$ & $28$ & $7$ & $1^{1}19^{1}38^{4}57^{6}114^{9}$ & $19$ \\
    $\J_1$ & $19{:}6$ & $4$ & $1540$ & $34485$ & $627$ & $28$ & $11$ & $1^{1}19^{1}38^{4}57^{6}114^{9}$ & $19$ \\
    $\J_1$ & $11{:}10$ & $5$ & $1596$ & $8778$ & $165$ & $30$ & $3$  & $1^{1}11^{1}22^{2}55^{2}110^{13}$ & $11$ \\
    $\J_1$ & $11{:}10$ & $5$ & $1596$ & $20482$ & $385$ & $30$ & $7$  & $1^{1}11^{1}22^{2}55^{2}110^{13}$ & $11$ \\
    $\J_1$ & $11{:}10$ & $5$ & $1596$ & $55594$ & $1045$ & $30$ & $19$  & $1^{1}11^{1}22^{2}55^{2}110^{13}$ & $11$ \\
    $\HS$ & $\M_{22}$ & $1$ & $100$ & $4125$ & $165$ & $4$ & $5$ & $1^{1}22^{1}77^{1}$ & $22$ \\
    $\HS$ & $\M_{22}$ & $1$ & $100$ & $5775$ & $231$ & $4$ & $7$ & $1^{1}22^{1}77^{1}$ & $22$ \\
    $\HS{:}2$ & $\M_{22}.2$ & $2$ & $100$ & $4125$ & $165$ & $4$ & $5$ & $1^{1}22^{1}77^{1}$ & $22$ \\
    $\HS{:}2$ & $\M_{22}.2$ & $2$ & $100$ & $5775$ & $231$ & $4$ & $7$ & $1^{1}22^{1}77^{1}$ & $22$ \\
    $\McL$ & $\M_{22}$ & $2$ & $2025$ & $22275$ & $264$ & $24$ & $3$ & $1^{1}330^{1}462^{1}1232^{1}$ & $330$ \\
    $\McL$ & $\M_{22}$ & $2$ & $2025$ & $37125$ & $440$ & $24$ & $5$ & $1^{1}330^{1}462^{1}1232^{1}$ & $330$ \\
    $\McL$ & $\M_{22}$ & $2$ & $2025$ & $51975$ & $616$ & $24$ & $7$ & $1^{1}330^{1}462^{1}1232^{1}$ & $330$ \\
    $\McL$ & $\M_{22}$ & $3$ & $2025$ & $22275$ & $264$ & $24$ & $3$ & $1^{1}330^{1}462^{1}1232^{1}$ & $330$ \\
    $\McL$ & $\M_{22}$ & $3$ & $2025$ & $37125$ & $440$ & $24$ & $5$ & $1^{1}330^{1}462^{1}1232^{1}$ & $330$ \\
    $\McL$ & $\M_{22}$ & $3$ & $2025$ & $51975$ & $616$ & $24$ & $7$ & $1^{1}330^{1}462^{1}1232^{1}$ & $330$ \\
    \hline
\end{longtable}

\begin{longtable}{lllllllllllll} 
    \caption{Some possible parameters for some almost sporadic simple groups.} \label{tbl:poss-final}  \\ 
    \hline 
    $G$ & $H$ & $\nr(H)$ & $N$ & $\nr(N)$ & $K$ & $v$ &  $b$ & $r$ & $k$ & $\lambda$ & Orbit & Comment \\
    \hline 
    \endfirsthead 
    \multicolumn{13}{c}{{\normalsize \scshape \tablename\ \thetable{}} - continued} \\ \\ 
    \hline 
    $G$ & $H$ & $\nr(H)$ & $N$ & $\nr(N)$ & $K$ & $v$ &  $b$ & $r$ & $k$ & $\lambda$ & Orbit & Comment  \\
    \hline 
    \endhead 
    \hline 
    \multicolumn{13}{r}{{Continued}}\endfoot
    \endlastfoot
    $\M_{11}$ & $\A_6.2_3$ & $1$ & $\A_6.2_3$ & $1$ & $3^2 {:} \QD_{16}$ & $11$ & $55$ & $15$ & $3$ & $3$ & - & \nsubN \\
    $\M_{11}$ & $\A_6.2_3$ & $1$ & $3^2{:}\Q_8.2$ & $3$ & $3^2 {:} \QD_{16}$ & $11$ & $55$ & $15$ & $3$ & $3$ & - & \nsubK \\
    $\M_{11}$ & $3^2{:}\Q_8.2$ & $3$ & $\A_6.2_3$ & $1$ & - & $55$ & $99$ & $18$ & $10$ & $3$ & - & \nsubG \\
    $\M_{11}$ & $3^2{:}\Q_8.2$ & $3$ & $\A_6.2_3$ & $1$ & $\GL_2(3)$ & $55$ & $165$ & $30$ & $10$ & $5$ & - & \nsubN \\
    $\M_{11}$ & $3^2{:}\Q_8.2$ & $3$ & $3^2{:}\Q_8.2$ & $3$ & $\GL_2(3)$ & $55$ & $165$ & $30$ & $10$ & $5$ & - & \nsubN \\
    $\M_{11}$ & $3^2{:}\Q_8.2$ & $3$ & $2.\S_4$ & $5$ & $\GL_2(3)$ & $55$ & $165$ & $30$ & $10$ & $5$ & - & \nsubK \\
    $\M_{11}$ & $3^2{:}\Q_8.2$ & $3$ & $\A_6.2_3$ & $1$ & - & $55$ & $363$ & $66$ & $10$ & $11$ & - & \nsubG \\
    $\M_{22}$ & $\A_7$ & $3$ & $\PSL_3(4)$ & $1$ & $\A_4^2$ & $176$ & $3080$ & $105$ & $6$ & $3$ & - & \nsubN \\
    $\M_{22}$ & $\A_7$ & $3$ & $2^4{:}\A_6$ & $2$ & $\A_4^2$ & $176$ & $3080$ & $105$ & $6$ & $3$ & - & \norb \\
    $\M_{22}$ & $\A_7$ & $3$ & $\A_6.2_3$ & $7$ & $\A_4^2$ & $176$ & $3080$ & $105$ & $6$ & $3$ & - & \nsubN \\
    $\M_{22}$ & $\A_7$ & $4$ & $\PSL_3(4)$ & $1$ & $\A_4^2$ & $176$ & $3080$ & $105$ & $6$ & $3$ & - & \nsubN \\
    $\M_{22}$ & $\A_7$ & $4$ & $2^4{:}\A_6$ & $2$ & $\A_4^2$ & $176$ & $3080$ & $105$ & $6$ & $3$ & - & \norb \\
    $\M_{22}$ & $\A_7$ & $4$ & $\A_6.2_3$ & $7$ & $\A_4^2$ & $176$ & $3080$ & $105$ & $6$ & $3$ & - & \nsubN \\
    $\M_{22}$ & $\A_6.2_3$ & $7$ & $\PSL_3(4)$ & $1$ & - & $616$ & $660$ & $45$ & $42$ & $3$ & - & \nsubG \\
    $\M_{22}$ & $\A_6.2_3$ & $7$ & $2^3{:}\SL_3(2)$ & $6$ & - & $616$ & $660$ & $45$ & $42$ & $3$ & - & \nsubG \\
    $\M_{22}$ & $\A_6.2_3$ & $7$ & $\PSL_3(4)$ & $1$ & $\A_4^2 {:} 2$ & $616$ & $1540$ & $105$ & $42$ & $7$ & - & \nsubN \\
    $\M_{22}$ & $\A_6.2_3$ & $7$ & $2^4{:}\A_6$ & $2$ & $\A_4^2 {:} 2$ & $616$ & $1540$ & $105$ & $42$ & $7$ & - & \nsubK \\
    $\M_{22}$ & $\A_6.2_3$ & $7$ & $\PSL_3(4)$ & $1$ & - & $616$ & $2420$ & $165$ & $42$ & $11$ & - & \nsubG \\
    $\M_{22}{:}2$ & $\A_6.2^2$ & $6$ & $\M_{22}$ & $1$ & $2^3 {:} \PSL_3(2)$ & $616$ & $660$ & $45$ & $42$ & $3$ & - & \norb \\
    $\M_{22}{:}2$ & $\A_6.2^2$ & $6$ & $\PSL_3(4).2_2$ & $2$ & $2^3 {:} \PSL_3(2)$ & $616$ & $660$ & $45$ & $42$ & $3$ & - & \nsubN \\
    $\M_{22}{:}2$ & $\A_6.2^2$ & $6$ & $2{\times}2^3{:}\PSL_3(2)$ & $5$ & $2^3 {:} \PSL_3(2)$ & $616$ & $660$ & $45$ & $42$ & $3$ & - & \norb \\
    $\M_{22}{:}2$ & $\A_6.2^2$ & $6$ & $\M_{22}$ & $1$ & $\A_4^2 {:} 4$ & $616$ & $1540$ & $105$ & $42$ & $7$ & - & \nsubK \\
    $\M_{22}{:}2$ & $\A_6.2^2$ & $6$ & $\M_{22}$ & $1$ & $(((2^4 {:} 3) {:} 2) {:} 3) {:} 2$ & $616$ & $1540$ & $105$ & $42$ & $
    7$ & - & \nsubN \\
    $\M_{22}{:}2$ & $\A_6.2^2$ & $6$ & $\M_{22}$ & $1$ & $\S_4^2$ & $616$ & $1540$ & $105$ & $42$ & $7$ & - & \nsubN \\
    $\M_{22}{:}2$ & $\A_6.2^2$ & $6$ & $\PSL_3(4).2_2$ & $2$ & $\A_4^2 {:} 4$ & $616$ & $1540$ & $105$ & $42$ & $7$ & - & \nsubN \\
    $\M_{22}{:}2$ & $\A_6.2^2$ & $6$ & $\PSL_3(4).2_2$ & $2$ & $(((2^4 {:} 3) {:} 2) {:} 3) {:} 2$ & $616$ & $1540$ & $105$ & $
    42$ & $7$ & - & \nsubN \\
    $\M_{22}{:}2$ & $\A_6.2^2$ & $6$ & $\PSL_3(4).2_2$ & $2$ & $\S_4^2$ & $616$ & $1540$ & $105$ & $42$ & $7$ & - & \nsubN \\
    $\M_{22}{:}2$ & $\A_6.2^2$ & $6$ & $2^4{:}\S_6$ & $3$ & $\A_4^2 {:} 4$ & $616$ & $1540$ & $105$ & $42$ & $7$ & - & \nsubK \\
    $\M_{22}{:}2$ & $\A_6.2^2$ & $6$ & $2^4{:}\S_6$ & $3$ & $(((2^4 {:} 3) {:} 2) {:} 3) {:} 2$ & $616$ & $1540$ & $105$ & $42$ & $
    7$ & - & \nsubK \\
    $\M_{22}{:}2$ & $\A_6.2^2$ & $6$ & $2^4{:}\S_6$ & $3$ & $\S_4^2$ & $616$ & $1540$ & $105$ & $42$ & $7$ & - & \nsubK \\
    $\M_{22}{:}2$ & $\A_6.2^2$ & $6$ & $\M_{22}$ & $1$ & - & $616$ & $2420$ & $165$ & $42$ & $11$ & - & \nsubG \\
    $\M_{22}{:}2$ & $\A_6.2^2$ & $6$ & $\PSL_3(4).2_2$ & $2$ & - & $616$ & $2420$ & $165$ & $42$ & $11$ & - & \nsubG \\
    $\M_{23}$ & $\M_{22}$ & $1$ & $\M_{22}$ & $1$ & $2^4 {:} A7$ & $23$ & $253$ & $33$ & $3$ & $3$ & - & \nsubN \\
    $\M_{23}$ & $\M_{22}$ & $1$ & $\M_{22}$ & $1$ & $\PSL_3(4) {:} 2$ & $23$ & $253$ & $33$ & $3$ & $3$ & - & \nsubN \\
    $\M_{23}$ & $\M_{22}$ & $1$ & $\PSL_3(4).2_2$ & $2$ & $2^4 {:} A7$ & $23$ & $253$ & $33$ & $3$ & $3$ & - & \nsubN \\
    $\M_{23}$ & $\M_{22}$ & $1$ & $\PSL_3(4).2_2$ & $2$ & $\PSL_3(4) {:} 2$ & $23$ & $253$ & $33$ & $3$ & $3$ & - & \nsubK \\
    $\M_{23}$ & $\M_{22}$ & $1$ & $2^4{:}\A_{7}$ & $3$ & $2^4 {:} A7$ & $23$ & $253$ & $33$ & $3$ & $3$ & - & \nsubK \\
    $\M_{23}$ & $\M_{22}$ & $1$ & $2^4{:}\A_{7}$ & $3$ & $\PSL_3(4) {:} 2$ & $23$ & $253$ & $33$ & $3$ & $3$ & - & \nsubN \\
    $\M_{23}$ & $\PSL_3(4).2_2$ & $2$ & $\M_{22}$ & $1$ & - & $253$ & $4554$ & $126$ & $7$ & $3$ & - & \nsubG \\
    $\M_{23}$ & $\PSL_3(4).2_2$ & $2$ & $\PSL_3(4).2_2$ & $2$ & - & $253$ & $4554$ & $126$ & $7$ & $3$ & - & \nsubG \\
    $\M_{23}$ & $\PSL_3(4).2_2$ & $2$ & $2^4{:}\A_{7}$ & $3$ & - & $253$ & $4554$ & $126$ & $7$ & $3$ & - & \nsubG \\
    $\M_{23}$ & $\PSL_3(4).2_2$ & $2$ & $\A_{8}$ & $4$ & - & $253$ & $4554$ & $126$ & $7$ & $3$ & - & \nsubG \\
    $\M_{23}$ & $\PSL_3(4).2_2$ & $2$ & $\M_{22}$ & $1$ & $2^3 {:} \PSL_3(2)$ & $253$ & $7590$ & $210$ & $7$ & $5$ & - & \norb \\
    $\M_{23}$ & $\PSL_3(4).2_2$ & $2$ & $\M_{22}$ & $1$ & $2^3 {:} \PSL_3(2)$ & $253$ & $7590$ & $210$ & $7$ & $5$ & $B_{1}$ & \ndes \\
    $\M_{23}$ & $\PSL_3(4).2_2$ & $2$ & $\PSL_3(4).2_2$ & $2$ & $2^3 {:} \PSL_3(2)$ & $253$ & $7590$ & $210$ & $7$ & $5$ & - & \nsubN \\
    $\M_{23}$ & $\PSL_3(4).2_2$ & $2$ & $\PSL_3(4).2_2$ & $2$ & $2^3 {:} \PSL_3(2)$ & $253$ & $7590$ & $210$ & $7$ & $5$ & - & \nsubN \\
    $\M_{23}$ & $\PSL_3(4).2_2$ & $2$ & $2^4{:}\A_{7}$ & $3$ & $2^3 {:} \PSL_3(2)$ & $253$ & $7590$ & $210$ & $7$ & $5$ & - & \norb \\
    $\M_{23}$ & $\PSL_3(4).2_2$ & $2$ & $2^4{:}\A_{7}$ & $3$ & $2^3 {:} \PSL_3(2)$ & $253$ & $7590$ & $210$ & $7$ & $5$ & $B_{2}$ & \ndes \\
    $\M_{23}$ & $\PSL_3(4).2_2$ & $2$ & $\A_{8}$ & $4$ & $2^3 {:} \PSL_3(2)$ & $253$ & $7590$ & $210$ & $7$ & $5$ & - & \norb \\
    $\M_{23}$ & $\PSL_3(4).2_2$ & $2$ & $\A_{8}$ & $4$ & $2^3 {:} \PSL_3(2)$ & $253$ & $7590$ & $210$ & $7$ & $5$ & $B_{3}$ & \ndes \\
    $\M_{23}$ & $\PSL_3(4).2_2$ & $2$ & $\M_{22}$ & $1$ & - & $253$ & $16698$ & $462$ & $7$ & $11$ & - & \nsubG \\
    $\M_{23}$ & $\PSL_3(4).2_2$ & $2$ & $\PSL_3(4).2_2$ & $2$ & - & $253$ & $16698$ & $462$ & $7$ & $11$ & - & \nsubG \\
    $\M_{23}$ & $\PSL_3(4).2_2$ & $2$ & $2^4{:}\A_{7}$ & $3$ & - & $253$ & $16698$ & $462$ & $7$ & $11$ & - & \nsubG \\
    $\M_{23}$ & $\PSL_3(4).2_2$ & $2$ & $\A_{8}$ & $4$ & - & $253$ & $16698$ & $462$ & $7$ & $11$ & - & \nsubG \\
    $\M_{23}$ & $\PSL_3(4).2_2$ & $2$ & $\M_{22}$ & $1$ & - & $253$ & $34914$ & $966$ & $7$ & $23$ & - & \nsubG \\
    $\M_{23}$ & $\PSL_3(4).2_2$ & $2$ & $\PSL_3(4).2_2$ & $2$ & - & $253$ & $34914$ & $966$ & $7$ & $23$ & - & \nsubG \\
    $\M_{23}$ & $\PSL_3(4).2_2$ & $2$ & $2^4{:}\A_{7}$ & $3$ & - & $253$ & $34914$ & $966$ & $7$ & $23$ & - & \nsubG \\
    $\M_{23}$ & $\PSL_3(4).2_2$ & $2$ & $\A_{8}$ & $4$ & - & $253$ & $34914$ & $966$ & $7$ & $23$ & - & \nsubG \\
    $\M_{23}$ & $2^4{:}\A_{7}$ & $3$ & $\M_{22}$ & $1$ & $2^4 {:} \GL_2(4)$ & $253$ & $3542$ & $140$ & $10$ & $5$ & - & \nsubN \\
    $\M_{23}$ & $2^4{:}\A_{7}$ & $3$ & $\PSL_3(4).2_2$ & $2$ & $2^4 {:} \GL_2(4)$ & $253$ & $3542$ & $140$ & $10$ & $5$ & - & \nsubN \\
    $\M_{23}$ & $2^4{:}\A_{7}$ & $3$ & $2^4{:}\A_{7}$ & $3$ & $2^4 {:} \GL_2(4)$ & $253$ & $3542$ & $140$ & $10$ & $5$ & - & \nsubN \\
    $\M_{23}$ & $2^4{:}\A_{7}$ & $3$ & $\A_{8}$ & $4$ & $2^4 {:} \GL_2(4)$ & $253$ & $3542$ & $140$ & $10$ & $5$ & - & \nsubN \\
    $\M_{23}$ & $2^4{:}\A_{7}$ & $3$ & $2^4{:}(3{\times}\A_{5}).2$ & $6$ & $2^4 {:} \GL_2(4)$ & $253$ & $3542$ & $140$ & $10$ & $
    5$ & - & \norb \\
    $\M_{23}$ & $\M_{11}$ & $5$ & $\M_{22}$ & $1$ & $2^4 {:} \S_5$ & $1288$ & $5313$ & $165$ & $40$ & $5$ & - & \norb \\
    $\M_{23}$ & $\M_{11}$ & $5$ & $\M_{22}$ & $1$ & $2^4 {:} \S_5$ & $1288$ & $5313$ & $165$ & $40$ & $5$ & - & \norb \\
    $\M_{23}$ & $\M_{11}$ & $5$ & $\PSL_3(4).2_2$ & $2$ & $2^4 {:} \S_5$ & $1288$ & $5313$ & $165$ & $40$ & $5$ & - & \norb \\
    $\M_{23}$ & $\M_{11}$ & $5$ & $\PSL_3(4).2_2$ & $2$ & $2^4 {:} \S_5$ & $1288$ & $5313$ & $165$ & $40$ & $5$ & - & \norb \\
    $\M_{23}$ & $\M_{11}$ & $5$ & $2^4{:}\A_{7}$ & $3$ & $2^4 {:} \S_5$ & $1288$ & $5313$ & $165$ & $40$ & $5$ & - & \norb \\
    $\M_{23}$ & $\M_{11}$ & $5$ & $2^4{:}\A_{7}$ & $3$ & $2^4 {:} \S_5$ & $1288$ & $5313$ & $165$ & $40$ & $5$ & - & \norb \\
    $\M_{23}$ & $\M_{11}$ & $5$ & $2^4{:}(3{\times}\A_{5}).2$ & $6$ & $2^4 {:} \S_5$ & $1288$ & $5313$ & $165$ & $40$ & $5$ & - & \norb \\
    $\M_{23}$ & $\M_{11}$ & $5$ & $2^4{:}(3{\times}\A_{5}).2$ & $6$ & $2^4 {:} \S_5$ & $1288$ & $5313$ & $165$ & $40$ & $5$ & - & \norb \\
    $\M_{24}$ & $\M_{12}.2$ & $4$ & $\M_{22}.2$ & $2$ & - & $1288$ & $27324$ & $297$ & $14$ & $3$ & - & \nsubG \\
    $\M_{24}$ & $\M_{12}.2$ & $4$ & $2^4{:}\A_{8}$ & $3$ & - & $1288$ & $27324$ & $297$ & $14$ & $3$ & - & \nsubG \\
    $\M_{24}$ & $\M_{12}.2$ & $4$ & $2^4{:}\A_{8}$ & $3$ & - & $1288$ & $5313$ & $165$ & $40$ & $5$ & - & \nsubG \\
    $\M_{24}$ & $\M_{12}.2$ & $4$ & $2^6{:}3.\S_{6}$ & $5$ & - & $1288$ & $5313$ & $165$ & $40$ & $5$ & - & \nsubG \\
    $\M_{24}$ & $\M_{12}.2$ & $4$ & $\M_{22}.2$ & $2$ & - & $1288$ & $45540$ & $495$ & $14$ & $5$ & - & \nsubG \\
    $\M_{24}$ & $\M_{12}.2$ & $4$ & $2^4{:}\A_{8}$ & $3$ & - & $1288$ & $45540$ & $495$ & $14$ & $5$ & - & \nsubG \\
    $\M_{24}$ & $\M_{12}.2$ & $4$ & $2^6{:}(\PSL_{3}(2){\times}\S_{3})$ & $7$ & - & $1288$ & $45540$ & $495$ & $14$ & $5$ & - & \nsubG \\
    $\M_{24}$ & $\M_{12}.2$ & $4$ & $\M_{22}.2$ & $2$ & $2^6 {:} \A_5$ & $1288$ & $63756$ & $693$ & $14$ & $7$ & - & \nsubN \\
    $\M_{24}$ & $\M_{12}.2$ & $4$ & $\M_{22}.2$ & $2$ & $(2^4 {:} \A_5) {:} 2^2$ & $1288$ & $63756$ & $693$ & $14$ & $
    7$ & - & \nsubN \\
    $\M_{24}$ & $\M_{12}.2$ & $4$ & $\M_{22}.2$ & $2$ & $(2^4 {:} \A_5) {:} 4$ & $1288$ & $63756$ & $693$ & $14$ & $7$ & - & \nsubN \\
    $\M_{24}$ & $\M_{12}.2$ & $4$ & $\M_{22}.2$ & $2$ & $(2^4 {:} \A_5) {:} 2^2$ & $1288$ & $63756$ & $693$ & $14$ & $
    7$ & - & \nsubK \\
    $\M_{24}$ & $\M_{12}.2$ & $4$ & $\M_{22}.2$ & $2$ & $(2^6 {:} 15) {:} 4$ & $1288$ & $63756$ & $693$ & $14$ & $
    7$ & - & \nsubN \\
    $\M_{24}$ & $\M_{12}.2$ & $4$ & $2^4{:}\A_{8}$ & $3$ & $2^6 {:} \A_5$ & $1288$ & $63756$ & $693$ & $14$ & $7$ & - & \nsubN \\
    $\M_{24}$ & $\M_{12}.2$ & $4$ & $2^4{:}\A_{8}$ & $3$ & $(2^4 {:} \A_5) {:} 2^2$ & $1288$ & $63756$ & $693$ & $14$ & $
    7$ & - & \nsubN \\
    $\M_{24}$ & $\M_{12}.2$ & $4$ & $2^4{:}\A_{8}$ & $3$ & $(2^4 {:} \A_5) {:} 4$ & $1288$ & $63756$ & $693$ & $14$ & $7$ & - & \nsubN \\
    $\M_{24}$ & $\M_{12}.2$ & $4$ & $2^4{:}\A_{8}$ & $3$ & $(2^4 {:} \A_5) {:} 2^2$ & $1288$ & $63756$ & $693$ & $14$ & $
    7$ & - & \nsubN \\
    $\M_{24}$ & $\M_{12}.2$ & $4$ & $2^4{:}\A_{8}$ & $3$ & $(2^6 {:} 15) {:} 4$ & $1288$ & $63756$ & $693$ & $14$ & $
    7$ & - & \nsubN \\
    $\M_{24}$ & $\M_{12}.2$ & $4$ & $2^6{:}3.\S_{6}$ & $5$ & $2^6 {:} \A_5$ & $1288$ & $63756$ & $693$ & $14$ & $7$ & - & \nsubK \\
    $\M_{24}$ & $\M_{12}.2$ & $4$ & $2^6{:}3.\S_{6}$ & $5$ & $(2^4 {:} \A_5) {:} 2^2$ & $1288$ & $63756$ & $693$ & $14$ & $
    7$ & - & \nsubK \\
    $\M_{24}$ & $\M_{12}.2$ & $4$ & $2^6{:}3.\S_{6}$ & $5$ & $(2^4 {:} \A_5) {:} 4$ & $1288$ & $63756$ & $693$ & $14$ & $7$ & - & \nsubK \\
    $\M_{24}$ & $\M_{12}.2$ & $4$ & $2^6{:}3.\S_{6}$ & $5$ & $(2^4 {:} \A_5) {:} 2^2$ & $1288$ & $63756$ & $693$ & $14$ & $7$ & - & \nsubK \\
    $\M_{24}$ & $\M_{12}.2$ & $4$ & $2^6{:}3.\S_{6}$ & $5$ & $(2^6 {:} 15) {:} 4$ & $1288$ & $63756$ & $693$ & $
    14$ & $7$ & - & \nsubK \\
    $\M_{24}$ & $\M_{12}.2$ & $4$ & $\M_{22}.2$ & $2$ & - & $1288$ & $209484$ & $2277$ & $14$ & $23$ & - & \nsubG \\
    $\M_{24}$ & $\M_{12}.2$ & $4$ & $2^4{:}\A_{8}$ & $3$ & - & $1288$ & $209484$ & $2277$ & $14$ & $23$ & - & \nsubG \\
    $\HS$ & $\PSU_3(5).2$ & $2$ & $2^4.\S_{6}$ & $6$ & $(2^4 {:} \A_6) {:} 2$ & $176$ & $3850$ & $175$ & $8$ & $7$ & - & \nsubK \\
    $\HS$ & $\PSU_3(5).2$ & $3$ & $2^4.\S_{6}$ & $6$ & $(2^4 {:} \A_6) {:} 2$ & $176$ & $3850$ & $175$ & $8$ & $7$ & - & \nsubK \\
    $\HS{:}2$ & $\M_{22}.2$ & $2$ & $\HS$ & $1$ & - & $100$ & $330$ & $33$ & $10$ & $3$ & - & \nsubG \\
    $\HS{:}2$ & $\M_{22}.2$ & $2$ & $\HS$ & $1$ & - & $100$ & $550$ & $55$ & $10$ & $5$ & - & \nsubG \\
    $\HS{:}2$ & $\M_{22}.2$ & $2$ & $\HS$ & $1$ & - & $100$ & $770$ & $77$ & $10$ & $7$ & - & \nsubG \\
    $\J_{2}$ & $\PSU_3(3)$ & $1$ & $2^{2+4}.3{\times}\S_3$ & $4$ & 
    $(([2^6] {:} 3) {:} 2) {:} 3$
    & $100$ & $525$ & $
    63$ & $12$ & $7$ & $B_{4}$ & \ndes \\
    $\J_2{:}2$ & $\PSU_3(3).2$ & $2$ & $2^{2+4}{:}(\S_3{\times}\S_3)$ & $5$ & 
    $(([2^6] {:} 3) {:} 2) {:} 3$
    & $100$ & $
    525$ & $63$ & $12$ & $7$ & $B_{5}$ & \ndes \\
    \hline
    \multicolumn{13}{l}{The $K$-orbits $B_{1}$, $B_{2}$ and $B_{3}${:}}\\ 
    \multicolumn{13}{l}{$B_1=\{50, 64, 101, 142, 187, 202, 242 \}$}\\
    \multicolumn{13}{l}{$B_2=\{4, 38, 63, 134, 162, 200, 215\}$}\\
    \multicolumn{13}{l}{$B_3=\{ 5, 34, 69, 149, 201, 221, 243\}$}\\
    \multicolumn{13}{l}{$B_4=\{6, 12, 23, 24, 26, 34, 39, 42, 68, 70, 78, 86\}$}\\
    \multicolumn{13}{l}{$B_5=\{10, 30, 36, 44, 46, 49, 64, 67, 68, 75, 86, 93\}$}\\
\end{longtable}
\normalsize




\end{document}